\newcounter{lemma}[section]
\newcounter{corollary}[section]
\newcounter{remark}[section]
\newcounter{theorem}[section]
\newcounter{proposition}[section]
\newcounter{example}
\numberwithin{equation}{section}
\begin{document}

\markboth{\centerline{E.~SEVOST'YANOV}}{\centerline{ON  LOCAL
BEHAVIOR OF MAPPINGS WITH INTEGRAL CONSTRAINTS...}}

\def\cc{\setcounter{equation}{0}
\setcounter{figure}{0}\setcounter{table}{0}}

\overfullrule=0pt


\author{E.~SEVOST'YANOV}

\title{
{\bf ON GLOBAL BEHAVIOR OF MAPPINGS WITH INTEGRAL CONSTRAINTS}}

\date{\today}
\maketitle

\begin{abstract}
This article is devoted to the study of mappings with branch points
whose characteristics satisfy integral-type constraints. We have
proved theorems concerning their local and global behavior.  In
particular, we established the equicontinuity of families of such
mappings inside their definition domain, as well as, under
additional conditions, equicontinuity of the families of these
mappings in its closure.
\end{abstract}

\bigskip
{\bf 2010 Mathematics Subject Classification: Primary 30C65;
Secondary 31A15, 31B25}

\section{Introduction}

This article is devoted to the study of mappings satisfying upper
bounds for the distortion of the modulus of families of paths, see,
for example, \cite{Cr}, \cite{GRY}, \cite{MRV$_1$}--\cite{MRV$_2$},
\cite{MRSY} and \cite{RSY}. In particular, here we are dealing with
mappings whose characteristics satisfy the so-called conditions of
integral type, see, for example, \cite{RSY}, \cite{RS} and
\cite{Sev$_3$}. The main purpose of the present manuscript is to
study the local behavior of mappings having branch points whose
characteristics are bounded only on the average. It is worth noting
some of the previous results in this direction. In particular,
in~\cite{RS}, homeomorphisms with similar conditions were
investigated, and in~\cite{Sev$_3$}, mappings with branch points
having a general characteristic $Q.$ Unfortunately, the most general
case when the mappings are not homeomorphisms and also do not have a
common majorant has been overlooked. Note that the most interesting
applications related to the study of the Beltrami equation and the
Dirichlet problem for it are associated with the absence of a
general majorant for maximal complex characteristics, (see, e.g.,
\cite{Dyb} and \cite{L}).

\medskip
Let us move on to definitions and formulations of results. Given
$p\geqslant 1,$ $M_p$ denotes the $p$-modulus of a family of paths,
and the element $dm(x)$ corresponds to a Lebesgue measure in ${\Bbb
R}^n,$ $n\geqslant 2,$ see~\cite{Va}. In what follows, we usually
write $M(\Gamma)$ instead of $M_n(\Gamma).$ For the sets $A,
B\subset{\Bbb R}^n$ we set, as usual,
$${\rm diam}\,A=\sup\limits_{x, y\in A}|x-y|\,,\quad {\rm dist}\,(A, B)=\inf\limits_{x\in A,
y\in B}|x-y|\,.$$
Sometimes we also write $d(A)$ instead of ${\rm diam}\,A$ and $d(A,
B)$ instead of ${\rm dist\,}(A, B),$ if no misunderstanding is
possible. For given sets $E$ and $F$ and a given domain $D$ in
$\overline{{\Bbb R}^n}={\Bbb R}^n\cup \{\infty\},$ we denote by
$\Gamma(E, F, D)$ the family of all paths $\gamma:[0, 1]\rightarrow
\overline{{\Bbb R}^n}$ joining $E$ and $F$ in $D,$ that is,
$\gamma(0)\in E,$ $\gamma(1)\in F$ and $\gamma(t)\in D$ for all
$t\in (0, 1).$ Everywhere below, unless otherwise stated, the
boundary and the closure of a set are understood in the sense of an
extended Euclidean space $\overline{{\Bbb R}^n}.$ Let
$x_0\in\overline{D},$ $x_0\ne\infty,$
$$S(x_0,r) = \{
x\,\in\,{\Bbb R}^n : |x-x_0|=r\}\,, S_i=S(x_0, r_i)\,,\quad
i=1,2\,,$$
\begin{equation}\label{eq6} A=A(x_0, r_1, r_2)=\{ x\,\in\,{\Bbb R}^n :
r_1<|x-x_0|<r_2\}\,.
\end{equation}
Everywhere below, unless otherwise stated, the closure
$\overline{A}$ and the boundary $\partial A $ of the set $A$ are
understood in the topology of the space $\overline{{\Bbb R}^n}={\Bbb
R}^n\cup\{\infty\}.$ Let $Q:{\Bbb R}^n\rightarrow {\Bbb R}^n$ be a
Lebesgue measurable function satisfying the condition $Q(x)\equiv 0$
for $x\in{\Bbb R}^n\setminus D.$ Given $p\geqslant 1,$ a mapping
$f:D\rightarrow \overline{{\Bbb R}^n}$ is called a {\it ring
$Q$-mapping at the point $x_0\in \overline{D}\setminus \{\infty\}$
with respect to $p$-modulus}, if the condition
\begin{equation} \label{eq2*!}
M_p(f(\Gamma(S_1, S_2, D)))\leqslant \int\limits_{A\cap D} Q(x)\cdot
\eta^p (|x-x_0|)\, dm(x)
\end{equation}
holds for all $0<r_1<r_2<d_0:=\sup\limits_{x\in D}|x-x_0|$ and all
Lebesgue measurable functions $\eta:(r_1, r_2)\rightarrow [0,
\infty]$ such that
\begin{equation}\label{eq8B}
\int\limits_{r_1}^{r_2}\eta(r)\,dr\geqslant 1\,.
\end{equation}
Inequalities of the form~(\ref{eq2*!}) were established for many
well-known classes of mappings. So, for quasiconformal mappings and
mappings with bounded distortion, they hold for $p=n$ and some
$Q(x)\equiv K=const$ (see, for example, \cite[Theorem~7.1]{MRV$_1$}
and \cite[Definition~13.1]{Va}). Such inequalities also hold for
many mappings with unbounded characteristic, in particular, for
homeomorphisms belonging to the class $W_{\rm loc}^{1, p},$ $p>n-1,$
the inner dilatation of the order $\alpha:=\frac{p}{p-n+1}$ is
locally integrable (see, for example, \cite[Theorems~8.1, 8.5]{MRSY}
and \cite[Corollary~2]{Sal$_1$}, \cite[Theorem~9,
Lemma~5]{Sal$_2$}).

\medskip
The concept of a set of capacity zero, used below, can be found
in~\cite[Section~2.12]{MRV$_2$} and is therefore omitted. A mapping
$f:D\rightarrow {\Bbb R}^n$ is called {\it discrete} if the preimage
$\{f^{\,-1}(y)\}$ of each point $y\,\in\,{\Bbb R}^n$ consist of
isolated points, and {\it open} if the image of any open set
$U\subset D $ is an open set in ${\Bbb R}^n.$ Let us formulate the
main results of this manuscript.
In what follows, $h$ denotes the so-called chordal metric defined by
the equalities
\begin{equation}\label{eq1E}
h(x,y)=\frac{|x-y|}{\sqrt{1+{|x|}^2} \sqrt{1+{|y|}^2}}\,,\quad x\ne
\infty\ne y\,, \quad\,h(x,\infty)=\frac{1}{\sqrt{1+{|x|}^2}}\,.
\end{equation}
For a given set $E\subset\overline{{\Bbb R}^n},$ we set
\begin{equation}\label{eq9C}
h(E):=\sup\limits_{x,y\in E}h(x, y)\,,
\end{equation}
The quantity $h(E)$ in~(\ref{eq9C}) is called the {\it chordal
diameter} of the set $E.$ For given sets $A, B\subset
\overline{{\Bbb R}^n},$ we put
$h(A, B)=\inf\limits_{x\in A, y\in B}h(x, y),$
where $h$ is a chordal metric defined in~(\ref{eq1E}).

\medskip
Given a domain $D\subset {\Bbb R}^n,$ a number $M_0>0,$ a set
$E\subset \overline{{\Bbb R}^n}$ and a strictly increasing function
$\Phi\colon\overline{{\Bbb R}^{+}}\rightarrow\overline{{\Bbb
R}^{+}}$ let us denote by ${\frak F}^{\Phi}_{M_0, E}(D)$ the family
of all open discrete mappings $f:D\rightarrow \overline{{\Bbb
R}^n}\setminus E$ for which there exists a function
$Q=Q_f(x):D\rightarrow [0, \infty]$ such
that~(\ref{eq2*!})--(\ref{eq8B}) hold for any $x_0\in D$ with $p=n$
and, in addition,
\begin{equation}\label{eq2!!A}
\int\limits_D\Phi(Q(x))\frac{dm(x)}{\left(1+|x|^2\right)^n}\
\leqslant M_0<\infty\,.
\end{equation}
An analogue of the following statement was established for
homeomorphisms in~\cite[Theorem~4.1]{RS}, and for mappings whose
corresponding function $Q$ is fixed, in~\cite[Theorem~1]{Sev$_3$}.
However, let us note that it is in the form given below that the
indicated statement seems to be the most interesting from the point
of view of applications to the problem of compactness of solutions
of the Beltrami equations and the Dirichlet problem (see, for
example, \cite[Theorem~2]{Dyb}) and \cite[Theorem~1]{L}).

\medskip
\begin{theorem}\label{th1}
{\sl\, Let $D$ be a domain in ${\Bbb R}^n,$ $n\geqslant 2,$ and let
${\rm cap}\,E>0.$ If
\begin{equation}\label{eq3!A} \int\limits_{\delta_0}^{\infty}
\frac{d\tau}{\tau\left[\Phi^{-1}(\tau)\right]^{\frac{1}{n-1}}}=
\infty
\end{equation}
holds for some $\delta_0>\tau_0:=\Phi(0),$ then ${\frak
F}^{\Phi}_{M_0, E}(D)$ is equicontinuous in $D.$
 }
\end{theorem}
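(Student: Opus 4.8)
The plan is to reduce the equicontinuity statement to the modulus estimate~(\ref{eq2*!}) by choosing the test function $\eta$ cleverly on a spherical ring and then controlling the resulting integral via the divergence condition~(\ref{eq3!A}). Fix $x_0\in D$, pick $r_0>0$ with $\overline{B(x_0,r_0)}\subset D$, and for small $\varepsilon\in(0,r_0)$ consider the ring $A(x_0,\varepsilon,r_0)$. For $f\in{\frak F}^{\Phi}_{M_0,E}(D)$ the image $f(\Gamma(S(x_0,\varepsilon),S(x_0,r_0),D))$ satisfies~(\ref{eq2*!}) with $p=n$ and $Q=Q_f$. On the target side, since ${\rm cap}\,E>0$ and $f$ omits $E$, a standard capacity/modulus lower bound (of the type used in \cite{RS} and \cite{Sev$_3$}) gives that if $h(f(B(x_0,\varepsilon)))$ fails to be small, then $M(f(\Gamma(S(x_0,\varepsilon),S(x_0,r_0),D)))$ is bounded below by a positive constant $\delta=\delta(E,h(f(B(x_0,\varepsilon))))$ depending only on $E$ and the chordal diameter of the image — here one uses that $\overline{{\Bbb R}^n}\setminus E$ has a ``thick'' complement, so two nondegenerate continua in it are not separated too easily.

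The heart of the argument is the choice of $\eta$. Following the Kovtonyuk–Ryazanov–Salimov–Sevost'yanov technique for $Q$ of finite mean oscillation / integral type, I would set
$$\eta(r)=\frac{1}{r\,q_{x_0}^{1/(n-1)}(r)\,I(\varepsilon,r_0)},\qquad q_{x_0}(r)=\frac{1}{\omega_{n-1}r^{n-1}}\int\limits_{S(x_0,r)}Q_f\,d\mathcal H^{n-1},$$
with $I(\varepsilon,r_0)=\int_\varepsilon^{r_0}\frac{dr}{r\,q_{x_0}^{1/(n-1)}(r)}$, so that~(\ref{eq8B}) holds with equality. Plugging this into~(\ref{eq2*!}) and integrating in spherical coordinates, the right-hand side collapses to $\omega_{n-1}\,I(\varepsilon,r_0)^{1-n}$. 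Combining with the lower bound, one gets
$$\delta\bigl(E,h(f(B(x_0,\varepsilon)))\bigr)\ \leqslant\ \frac{\omega_{n-1}}{I(\varepsilon,r_0)^{\,n-1}}\,,$$
so it suffices to show $I(\varepsilon,r_0)\to\infty$ as $\varepsilon\to0$, uniformly over the family. This is exactly where~(\ref{eq2!!A}) and~(\ref{eq3!A}) enter: the bound~(\ref{eq2!!A}) controls $\int_D\Phi(Q_f)\,(1+|x|^2)^{-n}\,dm\leqslant M_0$, hence a bound on the average of $\Phi(q_{x_0}(r))$ over $r$ near $x_0$ uniform in $f$; then a standard lemma (see \cite[Lemma~2]{RS} or the Jensen-type estimates in \cite{RSY}) converts the divergence of $\int_{\delta_0}^\infty \frac{d\tau}{\tau[\Phi^{-1}(\tau)]^{1/(n-1)}}$ into the divergence of $\int_0^{r_0}\frac{dr}{r\,q_{x_0}^{1/(n-1)}(r)}$, with a rate that depends only on $M_0$, $\Phi$, $r_0$ and a lower bound on $1+|x|^2$ on $\overline{B(x_0,r_0)}$ — not on the individual $f$.

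To upgrade this pointwise-type estimate to genuine equicontinuity on all of $D$, I would localize: cover any compact subset of $D$ by finitely many balls $\overline{B(x_i,r_i)}\subset D$, run the above argument at each $x_i$, and observe that all the constants produced ($\omega_{n-1}$, the capacity constant from $E$, the lower bound on $1+|x|^2$, and $M_0$) are uniform over $i$ and over $f\in{\frak F}^{\Phi}_{M_0,E}(D)$; one also uses that the chordal metric is bounded, so ``$h(f(B(x_0,\varepsilon)))$ small'' is the right notion of smallness near $\infty$ as well. The only subtlety in the localization is that $x_0$ ranges over a compact set while $d_0=\sup_{x\in D}|x-x_0|$ may be large or infinite, but since we only use rings $A(x_0,\varepsilon,r_0)$ with $r_0$ fixed and small this causes no difficulty.

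The main obstacle I anticipate is the measurability/selection of $\eta$ together with the passage from the integral bound~(\ref{eq2!!A}) to a \emph{uniform} lower bound on $I(\varepsilon,r_0)$: one must handle the possibility that $q_{x_0}(r)=\infty$ or $=0$ on a set of positive measure, argue that $\eta$ is still admissible, and — most delicately — make sure the divergence rate extracted from~(\ref{eq3!A}) does not secretly depend on $f$. This is precisely the technical lemma (a Jensen-inequality argument on the sublevel sets of $q_{x_0}$) that must be invoked or reproved; everything else is a fairly standard assembly of the modulus method.
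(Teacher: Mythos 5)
Your proposal follows essentially the same route as the paper: the admissible function $\eta(r)=1/\bigl(r\,q_{x_0}^{1/(n-1)}(r)\,I\bigr)$ yielding the bound $\omega_{n-1}I^{1-n}$ is exactly the paper's Proposition~\ref{pr2}, the uniform divergence of $I(\varepsilon,r_0)$ extracted from~(\ref{eq2!!A}) and~(\ref{eq3!A}) via the Jensen-type argument of \cite[Lemma~3.1]{RS} is the paper's Lemma~\ref{lem1}, and the lower bound coming from ${\rm cap}\,E>0$ and the omission of $E$ is the paper's Proposition~\ref{pr1}. The only (cosmetic) difference is that the paper packages the two-sided estimate through condensers and their capacities (Propositions~\ref{pr3} and~\ref{pr2}, Lemma~\ref{lem3}) rather than directly through moduli of image path families, so your sketch is correct and matches the published proof.
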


\medskip
Note that the statement of Theorem~\ref{th1} is much simpler and
more elegant for the case $p\in(n-1, n).$ Given $p\geqslant 1,$ a
domain $D\subset {\Bbb R}^n,$ a number $M_0>0$ and a strictly
increasing function $\Phi\colon\overline{{\Bbb
R}^{+}}\rightarrow\overline{{\Bbb R}^{+}}$ let us denote by ${\frak
F}^{\Phi}_{M_0, p}(D)$ the family of all open discrete mappings
$f:D\rightarrow {\Bbb R}^n$ for which there exists a function
$Q=Q_f(x):D\rightarrow [0, \infty]$ such
that~(\ref{eq2*!})--(\ref{eq8B}) hold for any $x_0\in D$ and, in
addition, (\ref{eq2!!A}) holds. The following statement is true.

\medskip
\begin{theorem}\label{th2}
{\sl\, Let $D$ be a domain in ${\Bbb R}^n,$ $n\geqslant 2,$ and let
$p\in (n-1, n).$ If~(\ref{eq3!A}) holds for some
$\delta_0>\tau_0:=\Phi(0),$ then ${\frak F}^{\Phi}_{M_0, p}(D)$ is
equicontinuous in $D.$
 }
\end{theorem}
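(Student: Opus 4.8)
The plan is to reduce Theorem~\ref{th2} to the modular machinery already available for ring $Q$-mappings and then exploit the subexponential integrability condition~(\ref{eq3!A}) exactly as in the homeomorphic case. Fix a point $x_0\in D$ and a ball $\overline{B(x_0,\varepsilon_0)}\subset D$. For a mapping $f\in{\frak F}^{\Phi}_{M_0,p}(D)$ and radii $0<r_1<r_2<\varepsilon_0$, one applies the defining inequality~(\ref{eq2*!}) with the standard choice
\[
\eta(t)=\frac{1}{I(r_1,r_2)}\,\frac{1}{t\,q_{x_0}^{1/(p-1)}(t)}\,,\qquad
I(r_1,r_2)=\int_{r_1}^{r_2}\frac{dt}{t\,q_{x_0}^{1/(p-1)}(t)}\,,
\]
where $q_{x_0}(t)$ denotes the mean value of $Q$ over the sphere $S(x_0,t)$. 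This choice satisfies the normalization~(\ref{eq8B}), and a routine computation (writing the integral over the annulus in spherical coordinates and using H\"older's inequality exactly as in \cite[Theorem~4.1]{RS}) gives
\[
M_p\bigl(f(\Gamma(S_1,S_2,D))\bigr)\;\le\;\frac{\omega_{n-1}}{I(r_1,r_2)^{\,p-1}}\,,
\]
with $\omega_{n-1}$ the surface measure of the unit sphere; here the restriction $p<n$ is precisely what makes the geometric factor $t^{n-1-p}$ integrable near $0$ and keeps the H\"older exponents in the admissible range, which is why the statement is cleaner than in Theorem~\ref{th1}.

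Next I would convert this modular estimate into a diameter bound for the image. Since each $f$ is open and discrete and maps into ${\Bbb R}^n$, the image $f(\overline{B(x_0,r)})$ of a small ball is a continuum, and the preimage under $f$ of the sphere $S_2$ separates it; the lower modulus estimate for the $p$-modulus of a family of paths connecting two continua of given diameters lying in a fixed ball (the analogue of \cite[Lemma~7.38]{Va} or the estimate used in \cite{Sev$_3$}, valid for $p\in(n-1,n)$) yields
\[
M_p\bigl(f(\Gamma(S_1,S_2,D))\bigr)\;\ge\;c_{n,p}\cdot\frac{\bigl(\operatorname{diam} f(\overline{B(x_0,r_1)})\bigr)^{?}}{\bigl(\operatorname{diam} f(B(x_0,r_2))\bigr)^{?}}
\]
up to the usual normalizing powers; combining the two inequalities bounds $\operatorname{diam} f(\overline{B(x_0,r_1)})$ in terms of $I(r_1,r_2)^{-(p-1)}$ and of $\operatorname{diam} f(B(x_0,\varepsilon_0))$. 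Because $f$ maps into ${\Bbb R}^n$ and we only need a local estimate, the latter diameter can be controlled on a slightly larger ball, so the essential content is that $\operatorname{diam} f(\overline{B(x_0,r_1)})\to 0$ uniformly in the family as $r_1\to 0$, which follows once we show $I(r_1,\varepsilon_0)\to\infty$ uniformly.

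The final, and genuinely substantive, step is the uniform divergence of $I(r_1,\varepsilon_0)$. Here one uses~(\ref{eq2!!A}): the bound $\int_D\Phi(Q(x))\,dm(x)/(1+|x|^2)^n\le M_0$ gives, for $x_0$ and $\varepsilon_0$ fixed, a uniform bound $\int_{B(x_0,\varepsilon_0)}\Phi(Q(x))\,dm(x)\le C(x_0,\varepsilon_0)\,M_0$, i.e.\ $\Phi\circ Q$ is uniformly bounded in $L^1$ near $x_0$. One then invokes the standard lemma (see \cite{RS}, \cite{RSY}, essentially \cite[Lemma~13.1]{MRSY} type results) stating that if $\Phi$ is strictly increasing and $\int^{\infty}d\tau/(\tau[\Phi^{-1}(\tau)]^{1/(n-1)})=\infty$ — which is condition~(\ref{eq3!A}) — then $\int_0^{\varepsilon_0}dt/(t\,q_{x_0}^{1/(n-1)}(t))=\infty$, with the rate of divergence controlled solely by the $L^1$-norm of $\Phi\circ Q$. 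Since here the relevant exponent is $1/(p-1)$ rather than $1/(n-1)$ and $p-1<n-1$, we have $q_{x_0}^{1/(p-1)}(t)\le \max\{1,q_{x_0}^{1/(n-1)}(t)\}$ on the set where $q_{x_0}\le 1$ and otherwise a comparison reduces it to the $n-1$ case up to harmless constants, so~(\ref{eq3!A}) still forces $I(r_1,\varepsilon_0)\to\infty$ uniformly over the family. The main obstacle is exactly this uniformity: one must track constants carefully through the comparison lemma so that the rate of divergence depends only on $M_0$ and on $(x_0,\varepsilon_0)$, not on the individual $f$ or $Q_f$; once that is secured, equicontinuity at $x_0$ — and hence, $x_0$ being arbitrary, on all of $D$ — follows from the diameter estimate of the previous paragraph.
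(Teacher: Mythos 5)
Your overall strategy (modulus upper bound from~(\ref{eq2*!}) with a $q_{x_0}$-adapted test function, a lower capacity estimate to extract a diameter bound, and uniform divergence of $I$ from~(\ref{eq2!!A})--(\ref{eq3!A})) is the right skeleton, and your first and third steps correspond to Proposition~\ref{pr2} and Lemmas~\ref{lem1},~\ref{lem3} of the paper. Two remarks on those steps: the admissible function must be $\eta(t)=I^{-1}t^{-\frac{n-1}{p-1}}q_{x_0}^{-\frac{1}{p-1}}(t)$ with $I=\int_{r_1}^{r_2}t^{-\frac{n-1}{p-1}}q_{x_0}^{-\frac{1}{p-1}}(t)\,dt$; with your choice $\eta(t)=I^{-1}t^{-1}q_{x_0}^{-\frac{1}{p-1}}(t)$ the Fubini computation does not return $\omega_{n-1}I$ when $p\ne n$ (the factor $t^{n-1-p}$ survives), so the clean bound $\omega_{n-1}/I^{p-1}$ is lost. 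Your comparison between the exponents $1/(p-1)$ and $1/(n-1)$ also only goes the right way on the set where $q_{x_0}\leqslant 1$; on $\{q_{x_0}>1\}$ the inequality reverses, so that reduction is not as harmless as you claim.

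The genuine gap is in your middle step. You posit a lower bound for $M_p\bigl(f(\Gamma(S_1,S_2,D))\bigr)$ of the form $c_{n,p}\,(\operatorname{diam} f(\overline{B(x_0,r_1)}))^{?}/(\operatorname{diam} f(B(x_0,r_2)))^{?}$ with unspecified exponents, and then assert that the denominator ``can be controlled on a slightly larger ball.'' Neither claim is available: for $p\in(n-1,n)$ the standard lower estimate, namely~(\ref{2.5}) from \cite{Kr}, controls ${\rm cap}_p(A,C)$ from below by $d(C)^p/m(A)^{1-n+p}$, i.e.\ it involves the Lebesgue \emph{measure} of the outer set, not its diameter; and since the family ${\frak F}^{\Phi}_{M_0,p}(D)$ carries no normalization whatsoever, there is no a priori bound on $\operatorname{diam} f(B(x_0,\varepsilon_0))$ or $m(f(B(x_0,\varepsilon_0)))$, so the denominator is genuinely uncontrolled. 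The paper closes exactly this hole with a two-condenser argument: first, Maz'ya's inequality (Proposition~\ref{pr1A}, valid for $1<p<n$) bounds ${\rm cap}_p\,f(E)$ from below by a power of $m(f(C))$ \emph{alone}, with no reference to the outer set, which combined with Lemma~\ref{lem3} yields a radius $\varepsilon_1$ with $m(f(\overline{B(x_0,\varepsilon_1)}))\leqslant 1$ uniformly over the family, as in~(\ref{eqroughb}); only then is~(\ref{2.5}) applied to the second condenser $E_1=(B(x_0,\varepsilon_1),\overline{B(x_0,\varepsilon)})$, whose now-controlled denominator gives $d(f(\overline{B(x_0,\varepsilon)}))\rightarrow 0$ uniformly. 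Without this intermediate measure bound (or some substitute normalization) your argument does not close.
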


\medskip
\begin{remark}
Let $(X, d)$ and $\left(X^{{\,\prime}}, d^{\,\prime}\right)$ be
metric spaces with distances $d$ and $d^{\,\prime},$ respectively. A
family $\frak{F}$ of mappings $f:X\rightarrow X^{\,\prime}$ is said
to be {\it equicontinuous at a point} $x_0\in X,$ if for every
$\varepsilon>0$ there is $\delta=\delta(\varepsilon, x_0)>0$ such
that $d^{\,\prime}(f(x), f(x_0))<\varepsilon$ for all $f\in
\frak{F}$ and $x\in X$ with $d(x , x_0)<\delta$. The family
$\frak{F}$ is {\it equicontinuous} if $\frak{F}$ is equicontinuous
at every point $x_0\in X.$

\medskip
In Theorem~\ref{th1}, the equicontinuity of the corresponding family
of mappings should be understood in the sense of mappings of the
metric spaces $(X, d)$ and $\left(X^{\,\prime}, d^{\,\prime}
\right),$ where $X$ is a domain $D,$ and $d$ is a Euclidean metric
in $D,$ besides, $X^{\,\prime}=\overline{{\Bbb R}^n},$ and $h$ is a
chordal metric defined in~(\ref{eq1E}). At the same time, in
Theorem~\ref{th2} the space $X$ remains the same, and the space
$X^{\,\prime}$ is an usual Euclidean $n$-dimensional space with the
Euclidean metric $d^{\,\prime}.$
\end{remark}

\medskip
A separate research topic is the equicontinuity of families of
mappings in the closure of a domain. Results of this kind for fixed
characteristics were obtained in some of our papers.  In particular,
in~\cite{Sev$_4$} we considered the case of fixed domains between
which the mappings act, and in the
papers~\cite{SevSkv$_1$}--\cite{SevSkv$_2$} we considered the case
when the mapped domain can change. It should be noted that theorems
on normal families of mappings are especially important in the study
of the properties of solutions to the Dirichlet problem for the
Beltrami equation (see, for example, \cite{Dyb}). Note that the
classical results on the equicontinuity of quasiconformal mappings
in the closure of a domain were obtained by N\"{a}kki and Palka, see
e.g.~\cite[Theorem~3.3]{NP}. Let us formulate the main results
related to this case.

\medskip
Let $I$ be a fixed set of indices and let $D_i,$ $i\in I,$ be some
sequence of domains. Following~\cite[Sect.~2.4]{NP}, we say that a
family of domains $\{D_i\}_{i\in I}$ is {\it equi-uniform with
respect to $p$-modulus} if for any $r> 0$ there exists a number
$\delta> 0$ such that the inequality
\begin{equation}\label{eq17***}
M_p(\Gamma(F^{\,*},F, D_i))\geqslant \delta
\end{equation}
holds for any $i\in I$ and any continua $F, F^*\subset D$ such that
$h(F)\geqslant r$ and $h(F^{\,*})\geqslant r.$ It should be noted
that the condition of equi-uniformity of the sequence of domains
implies strong accessibility of the boundary of each of them with
respect to $p$-modulus (see, for example,
\cite[Remark~1]{SevSkv$_1$}). Given $p\geqslant 1,$ u number
$\delta>0,$ a domain $D\subset {\Bbb R}^n,$ $n\geqslant 2,$ a
continuum $A\subset D$ and a strictly increasing function
$\Phi\colon\overline{{\Bbb R}^{+}}\rightarrow\overline{{\Bbb
R}^{+}}$ denote $\frak{F}_{\Phi, A, p, \delta}(D)$ the family of all
homeomorphisms $f:D\rightarrow {\Bbb R}^n$ for which there exists a
function $Q=Q_f(x):D\rightarrow [0, \infty]$ such that: 1) relations
(\ref{eq2*!})--(\ref{eq8B}) hold for any $x_0\in \overline{D},$ 2)
the relation (\ref{eq2!!A}) holds and 3) the relations
$h(f(A))\geqslant\delta$ and $h(\overline{{\Bbb R}^n}\setminus
f(D))\geqslant \delta$ hold. The following statement is true.

\medskip
\begin{theorem}\label{th3} {\sl\, Let $p\in (n-1, n],$ a domain $D$
is locally connected at any $x_0\in\partial D$ and domains
$D_f^{\,\prime}=f(D)$ are equi-uniform with respect to $p$-modulus
over all $f\in \frak{F}_{\Phi, A, p, \delta}(D).$ If~(\ref{eq3!A})
holds for some $\delta_0>\tau_0:=\Phi(0),$ then any
$f\in\frak{F}_{Q, A, p, \delta}(D)$ has a continuous extension in
$\overline{D}$ and, besides that, the family $\frak{F}_{\Phi, A, p,
\delta}(\overline{D})$ of all extended mappings $\overline{f}:
\overline{D}\rightarrow \overline{{\Bbb R}^n},$ is equicontinuous in
$\overline{D}.$
  }
\end{theorem}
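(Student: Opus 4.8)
The plan is to split the proof according to the two features of the conclusion --- the existence of the continuous extension of each individual $f$, and the equicontinuity of the whole extended family --- and to reduce both to a single uniform modulus--decay estimate generated by the integral condition~(\ref{eq2!!A}) together with the divergence condition~(\ref{eq3!A}). Equicontinuity at interior points of $D$ is not new: for $p\in(n-1,n)$ it is exactly Theorem~\ref{th2}, while for $p=n$ it follows from the argument of Theorem~\ref{th1}, because the hypothesis $h(\overline{{\Bbb R}^n}\setminus f(D))\geqslant\delta$ gives a uniform lower bound for the capacity of the omitted set, playing the role of the fixed set $E$ there. Thus the genuinely new work concerns boundary points, and everything hinges on the behaviour of $M_p\bigl(f(\Gamma_\varepsilon)\bigr)$, where $\Gamma_\varepsilon:=\Gamma(S(x_0,\varepsilon),S(x_0,\varepsilon_0),A(x_0,\varepsilon,\varepsilon_0))$, as $\varepsilon\to0$.

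First I would establish the key estimate. Fixing $x_0\in\overline{D}$ and inserting into~(\ref{eq2*!}) the function $\eta$ on $(\varepsilon,\varepsilon_0)$ that is extremal for the normalization~(\ref{eq8B}) relative to the spherical mean $q_{x_0}(t)=\frac{1}{\omega_{n-1}t^{\,n-1}}\int_{S(x_0,t)}Q\,d\mathcal{H}^{n-1}$, one is led to a bound of the form
\[
M_p\bigl(f(\Gamma_\varepsilon)\bigr)\ \leqslant\ \frac{C}{\left(\,\int_{\varepsilon}^{\varepsilon_0}\frac{dt}{t\,[q_{x_0}(t)]^{1/(p-1)}}\,\right)^{p-1}}\,.
\]
The passage from the surface integral to this one--dimensional integral and the choice of the extremal $\eta$ are routine. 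The hard part, and the crux of the whole theorem, is to convert the averaged global bound~(\ref{eq2!!A}) on $\int_D\Phi(Q)\,dm(x)/(1+|x|^2)^n$ into divergence of the inner integral: using the standard substitution and Jensen--type argument (as in~\cite{RS}) one shows that, for $p\in(n-1,n]$, divergence of $\int_{\delta_0}^{\infty}d\tau/\bigl(\tau[\Phi^{-1}(\tau)]^{1/(n-1)}\bigr)$ in~(\ref{eq3!A}) forces the inner integral to tend to $\infty$ as $\varepsilon\to0$, uniformly over the family since $M_0$ is fixed. Hence $M_p(f(\Gamma_\varepsilon))\to0$ uniformly in $f$. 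Here the restriction $p\leqslant n$ is what makes the exponent $1/(n-1)$ of~(\ref{eq3!A}) the most demanding one, as $1/(p-1)\geqslant1/(n-1)$, while $p>n-1$ is needed below so that continua of positive chordal diameter carry positive $p$-modulus.

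Next I would prove the continuous extension of a fixed $f$ to $x_0\in\partial D$. Since the $f(D)$ are equi-uniform, $\partial f(D)$ is strongly accessible with respect to $p$-modulus, as already noted above. If the cluster set $C(f,x_0)$ contained two points $y_1\neq y_2$, I would take $x_k,x_k'\to x_0$ with $f(x_k)\to y_1$ and $f(x_k')\to y_2$, and, using local connectedness of $D$ at $x_0$, join $x_k$ to $x_k'$ by a continuum $C_k\subset D$ contained in a neighbourhood of $x_0$ shrinking to $x_0$. Then $f(C_k)$ meets two fixed disjoint neighbourhoods of $y_1$ and $y_2$, so strong accessibility gives $M_p(\Gamma(E,f(C_k),f(D)))\geqslant\delta^{*}>0$ for a fixed compactum $E\subset f(D)$. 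On the other hand each path of $\Gamma(f^{-1}(E),C_k,D)$ crosses an annulus $A(x_0,\varepsilon_k,\varepsilon_0)$ with $\varepsilon_k\to0$, so minorization and the estimate above give $M_p(\Gamma(E,f(C_k),f(D)))=M_p(f(\Gamma(f^{-1}(E),C_k,D)))\leqslant M_p(f(\Gamma_{\varepsilon_k}))\to0$ --- a contradiction. Thus $C(f,x_0)$ is a single point and $\overline f$ is well defined and continuous on $\overline D$.

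Finally I would upgrade this to boundary equicontinuity by making the cluster--set argument uniform; this is where equi-uniformity (rather than mere strong accessibility of each $\partial f(D)$) is indispensable, since the image domains $f_k(D)$ vary. Suppose equicontinuity failed at some $x_0\in\partial D$: there would be $f_k\in\frak{F}_{\Phi,A,p,\delta}(D)$, $x_k\to x_0$ and $\varepsilon_1>0$ with $h(\overline{f_k}(x_k),\overline{f_k}(x_0))\geqslant\varepsilon_1$. Using local connectedness I would pick connected $W_k\subset D\cap B(x_0,\rho_k)$, $\rho_k\to0$, whose closures contain both $x_0$ and $x_k$, so that $F_k:=\overline{f_k}(\overline{W_k})\cap f_k(D)$ is a continuum in $f_k(D)$ of chordal diameter at least $\varepsilon_1/2$ containing $f_k(x_k)$. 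Since also $h(f_k(A))\geqslant\delta$, equi-uniformity applied with $r=\min(\delta,\varepsilon_1/2)$ yields $M_p(\Gamma(f_k(A),F_k,f_k(D)))\geqslant\delta^{*}>0$ with $\delta^{*}$ independent of $k$. But $f_k^{-1}(F_k)\subset B(x_0,\rho_k)$ while $A$ lies outside some fixed $B(x_0,\rho_0)$, so every path of $\Gamma(A,f_k^{-1}(F_k),D)$ crosses $A(x_0,\rho_k,\rho_0)$, and minorization together with the uniform decay gives $M_p(\Gamma(f_k(A),F_k,f_k(D)))\leqslant M_p(f_k(\Gamma_{\rho_k}))\to0$, contradicting the lower bound. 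This proves equicontinuity of $\{\overline f\}$ at every boundary point and, combined with the interior case, in all of $\overline D$. I expect the uniform decay estimate of the second paragraph to be the main obstacle, both in extracting the divergent one--dimensional integral from the averaged condition~(\ref{eq2!!A}) and in keeping all constants independent of $f$.
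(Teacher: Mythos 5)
Your proposal follows the same scheme as the paper's own proof: interior equicontinuity is delegated to Theorem~\ref{th2} for $p<n$ and to the homeomorphism theory for $p=n$; the continuous extension comes from a cluster-set argument resting on strong accessibility (Remark~\ref{rem1}); and boundary equicontinuity is proved by contradiction, playing the lower bound from equi-uniformity (applied to $f_k(A)$ and the image of a small continuum near $x_0$) against the uniform decay of $M_p(f_k(\Gamma_{\rho_k}))$ furnished by Lemma~\ref{lem1} and Proposition~\ref{pr2} via minorization through spherical rings. (One small technical point: your $F_k=\overline{f_k}(\overline{W_k})\cap f_k(D)$ is connected but need not be compact, while the equi-uniformity condition~(\ref{eq17***}) is stated for continua; the paper avoids this by joining $x_k$ to a nearby interior point $x_k'$ by a path and using the compact image of that path.) However, two of the claims you use as bridges are false as stated.

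First, the interior case $p=n$. You assert that $h(\overline{{\Bbb R}^n}\setminus f(D))\geqslant\delta$ ``gives a uniform lower bound for the capacity of the omitted set,'' so that it can play the role of the fixed set $E$ in Theorem~\ref{th1}. Neither half of this is correct. A closed \emph{disconnected} set of chordal diameter $\geqslant\delta$ can have arbitrarily small capacity: for $D=A(0,r,1/r)$ and $f={\rm id}$ (a member of the family for suitable $A$ and $\delta$), the omitted set is $\overline{B(0,r)}\cup\bigl(\overline{{\Bbb R}^n}\setminus B(0,1/r)\bigr)$, whose chordal diameter is close to $1$ but whose capacity tends to $0$ as $r\to 0$. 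Moreover, Proposition~\ref{pr1} produces a $\delta$ depending on the fixed compactum $F$ itself, not merely on a lower bound for ${\rm cap}\,F$, so it cannot be applied to omitted sets varying with $f$. The correct argument for homeomorphisms --- the content of \cite[Theorem~4.1]{RS}, which is exactly what the paper cites at this point --- exploits the homeomorphism property: $\overline{{\Bbb R}^n}\setminus f(B(x_0,\varepsilon_0))$ is then a \emph{continuum} containing $\overline{{\Bbb R}^n}\setminus f(D)$, so a two-continua (Loewner-type) capacity estimate applies. Your argument never uses that $f$ is a homeomorphism and would equally ``prove'' the false variant of Theorem~\ref{th1} in which the fixed set $E$ of positive capacity is replaced by varying omitted sets of fixed chordal diameter.

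Second, your justification of the key decay estimate reverses an inequality. You claim that for $p\in(n-1,n]$ condition~(\ref{eq3!A}), with exponent $1/(n-1)$, is ``the most demanding one'' because $1/(p-1)\geqslant 1/(n-1)$, and hence implies the divergence needed with exponent $1/(p-1)$. The opposite holds: since $\Phi^{-1}(\tau)\to\infty$, for large $\tau$ one has $[\Phi^{-1}(\tau)]^{1/(p-1)}\geqslant[\Phi^{-1}(\tau)]^{1/(n-1)}$, so the integrand of~(\ref{eq2}) is the \emph{smaller} one, and divergence in~(\ref{eq3!A}) does not imply divergence in~(\ref{eq2}) when $p<n$: take $\Phi$ convex, strictly increasing, with $\Phi^{-1}(\tau)=(\log\tau)^{n-1}$ for large $\tau$; then~(\ref{eq3!A}) diverges while~(\ref{eq2}) converges since $(n-1)/(p-1)>1$. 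What Lemmas~\ref{lem1} and~\ref{lem3} actually require for $p<n$ is the stronger hypothesis~(\ref{eq2}). To be fair, the paper itself passes from~(\ref{eq3!A}) to Lemma~\ref{lem3} without comment, so it tacitly relies on the same implication; but your proposal elevates it to an explicit step with an incorrect justification, and it is precisely the step you identify as the crux of the whole proof.
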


\medskip
As usual, we use the notation
$$C(f, x):=\{y\in \overline{{\Bbb R}^n}:\exists\,x_k\in D: x_k\rightarrow x, f(x_k)
\rightarrow y, k\rightarrow\infty\}\,.$$
A mapping $f$ between domains $D$ and $D^{\,\prime}$ is called {\it
closed} if $f(E)$ is closed in $D^{\,\prime}$ for any closed set
$E\subset D$ (see, e.g., \cite[Section~3]{Vu}). Any open discrete
closed mapping is boundary preserving, i.e.  $C(f, \partial
D)\subset
\partial D^{\,\prime},$ where $C(f, \partial D)=\bigcup\limits_{x\in \partial D}C(f, x)$
(see e.g.~\cite[Theorem~3.3]{Vu}).

\medskip
Given $p\geqslant 1,$ a domain $D\subset {\Bbb R}^n,$ a set
$E\subset\overline{{\Bbb R}^n},$ a strictly increasing function
$\Phi\colon\overline{{\Bbb R}^{+}}\rightarrow\overline{{\Bbb
R}^{+}}$ and a number $\delta>0$ denote by $\frak{R}_{\Phi, \delta,
p, E}(D)$ the family of all open discrete and closed mappings
$f:D\rightarrow \overline{{\Bbb R}^n}\setminus E$ such that: 1)
relations (\ref{eq2*!})--(\ref{eq8B}) hold for any $x_0\in
\overline{D},$ 2) the relation (\ref{eq2!!A}) holds and 3) there
exists a continuum $K_f\subset D^{\,\prime}_f$ such that
$h(K_f)\geqslant \delta$ and $h(f^{\,-1}(K_f),
\partial D)\geqslant \delta>0.$ The following
statement is true.

\medskip
\begin{theorem}\label{th4} {\sl\, Let $p\in (n-1, n],$ a domain $D$
is locally connected at any point $x_0\in\partial D$ and, besides
that, domains $D_f^{\,\prime}=f(D)$ are equi-uniform with respect to
$p$-modulus over all $f\in\frak{R}_{\Phi, \delta, p, E}(D).$ Let
${\rm cap\,}E>0$ for $p=n,$ and let $E$ is any closed set for
$n-1<p<n.$ If~(\ref{eq3!A}) holds for some
$\delta_0>\tau_0:=\Phi(0),$ then any $f\in\frak{R}_{\Phi, \delta, p,
E}(D)$ has a continuous extension in $\overline{D}$ and, besides
that, the family $\frak{R}_{\Phi, \delta, p, E}(\overline{D})$ of
all extended mappings $\overline{f}: \overline{D}\rightarrow
\overline{{\Bbb R}^n},$ is equicontinuous in $\overline{D}.$}
\end{theorem}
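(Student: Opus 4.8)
The plan is to reduce Theorem~\ref{th4} to the interior equicontinuity results (Theorems~\ref{th1} and~\ref{th2}) together with a boundary-extension argument modeled on the homeomorphic case of Theorem~\ref{th3}, the key new ingredient being a replacement for the ``second boundary condition'' that works for open discrete closed mappings that need not be injective. First I would observe that, since each $f\in\frak{R}_{\Phi,\delta,p,E}(D)$ is open, discrete and closed, it is boundary preserving, i.e. $C(f,\partial D)\subset\partial D_f^{\,\prime}$; this is exactly what lets one hope for a continuous extension to $\overline D$. The interior equicontinuity of the family is immediate: for $p=n$ apply Theorem~\ref{th1} (using ${\rm cap}\,E>0$), and for $p\in(n-1,n)$ apply Theorem~\ref{th2}, after noting that the constraint~(\ref{eq2!!A}) and the modulus inequality~(\ref{eq2*!})--(\ref{eq8B}) are precisely the hypotheses of those theorems. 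So the whole content of Theorem~\ref{th4} beyond what is already proved lies at the boundary.

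Next I would establish that each individual $f$ extends continuously to $\overline D$. The standard route (as in \cite{Sev$_4$}, \cite{SevSkv$_1$}--\cite{SevSkv$_2$} and \cite{NP}) is: because $D$ is locally connected at $x_0\in\partial D$, choose a neighborhood basis of connected open sets $U_k$ at $x_0$ with ${\rm diam}\,U_k\to 0$; the sets $f(U_k\cap D)$ are open connected, and one must show their chordal diameters tend to $0$. Suppose not; then there are two sequences of points in $D$ converging to $x_0$ whose images stay chordally far apart, so one can join them by a curve $\gamma_k$ inside $U_k\cap D$ whose image $f(\gamma_k)$ has chordal diameter bounded below. Using equi-uniformity of $D_f^{\,\prime}$ with respect to $p$-modulus, together with the continuum $K_f$ from condition~3) (which has $h(K_f)\geqslant\delta$ and $h(f^{-1}(K_f),\partial D)\geqslant\delta$ — so it stays a fixed distance inside $D$ and cannot be approached by $\gamma_k$), one gets a lower bound $M_p(\Gamma(f(\gamma_k),K_f,D_f^{\,\prime}))\geqslant\sigma>0$. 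On the other hand, pulling this family back, it is majorized by $\Gamma(S_1,S_2,D)$-type families in a shrinking annulus around $x_0$; then the modulus inequality~(\ref{eq2*!}) with a suitable logarithmic-type $\eta$, estimated by the divergence-of-the-integral condition~(\ref{eq3!A}) exactly as in the proofs of Theorems~\ref{th1}--\ref{th2}, forces $M_p(f(\Gamma(\cdot)))\to 0$ as $U_k$ shrinks, a contradiction. This gives the continuous extension $\overline f:\overline D\to\overline{{\Bbb R}^n}$, and moreover $\overline f(\partial D)\subset\partial D_f^{\,\prime}$.

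Then I would upgrade per-map continuity to equicontinuity of the extended family on $\overline D$. At interior points this is the already-proved interior statement. At a boundary point $x_0\in\partial D$ one runs the same annulus/modulus estimate uniformly: fix $\varepsilon>0$; choose a connected neighborhood $U$ of $x_0$ with ${\rm diam}\,U$ small; for the contradiction hypothesis one would need, for some sequence $f_j$ and points $x_j,y_j\to x_0$, that $h(\overline{f_j}(x_j),\overline{f_j}(y_j))\geqslant\varepsilon$; joining $x_j,y_j$ by curves in $U\cap D$ and using equi-uniformity against the continua $K_{f_j}$ gives a uniform lower bound $M_p(\Gamma(f_j(\gamma_j),K_{f_j},D_{f_j}^{\,\prime}))\geqslant\sigma(\varepsilon)>0$, while the modulus condition~(\ref{eq2*!}) with the divergent integral~(\ref{eq3!A}) gives an upper bound that is $o(1)$ as ${\rm diam}\,U\to 0$, uniformly over $f\in\frak{R}_{\Phi,\delta,p,E}(D)$ because $M_0$, $\delta$, $\Phi$ are common to the whole family. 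That uniform conflict yields the $\delta(\varepsilon,x_0)$ of the definition of equicontinuity, and closes the proof.

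The main obstacle, I expect, is the non-injectivity: for homeomorphisms the set $f^{-1}(K_f)$ and the ``far continuum'' argument are transparent, but for open discrete closed maps one must be careful that the preimage of a small boundary neighborhood does not ``wrap around'' and that the curve families $f(\Gamma(S_1,S_2,D))$ are genuinely controlled by~(\ref{eq2*!}) rather than by the modulus of the image of the whole annulus — this is where the normality/boundary-preservation consequences of closedness (\cite[Theorem~3.3]{Vu}) and the requirement $h(f^{-1}(K_f),\partial D)\geqslant\delta$ (keeping a ``base continuum'' uniformly inside $D$) must be used carefully. A secondary technical point is treating the endpoint $p=n$ uniformly with $p\in(n-1,n)$: for $p=n$ one needs ${\rm cap}\,E>0$ so that the omitted set $E$ still carries positive modulus and the target $\overline{{\Bbb R}^n}\setminus E$ behaves well under the normal-family argument, whereas for $n-1<p<n$ the capacity plays no role and $E$ may be an arbitrary closed set. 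Once these are handled, the modulus/integral estimate is essentially a rerun of the computation already done for Theorems~\ref{th1}--\ref{th2}.
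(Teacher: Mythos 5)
Your proposal follows essentially the same route as the paper: interior equicontinuity from Theorems~\ref{th1} and~\ref{th2}, a continuous extension of each map to $\overline{D}$ obtained from the same modulus machinery (the paper does this via the truncation $Q^{\,\prime}=\max(Q,1)$ and references to earlier work), and a contradiction argument at a boundary point that pits the equi-uniformity lower bound $M_p(\Gamma(K_m,C_m,D_m^{\,\prime}))\geqslant b>0$ against an upper bound from Proposition~\ref{pr2} and Lemma~\ref{lem1} that tends to zero uniformly over the family. The one step you flag as the ``main obstacle'' is resolved in the paper exactly along the lines you anticipate: the image family $\Gamma(C_m,K_m,D_m^{\,\prime})$ is pulled back using Vuorinen's total-lifting theorem for open discrete closed maps, closedness forces every lifting to terminate in $f_m^{\,-1}(K_m)\subset\{x\in D:\,h(x,\partial D)\geqslant\delta\}$, and this compact set is embedded in a continuum $E_{\delta}$ kept away from $x_0$, so the lifted family is minorized by $\Gamma(S(x_0,2^{\,-m}),S(x_0,\varepsilon_0),D)$ and the annular estimate applies.
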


\begin{remark}\label{rem3}
In Theorems~\ref{th3} and~\ref{th4}, the equicontinuity should be
understood in terms of families of mappings between metric
spaces~$(X, d)$ and $\left(X^{\,\prime}, d^{\,\prime}\right),$ where
$X=\overline{D},$ $d$ is a chordal metric $h,$
$X^{\,\prime}=\overline{{\Bbb R}^n}$ and $d^{\,\prime}$ is a chordal
(spherical) metric $h,$ as well.
\end{remark}

\medskip
Theorems~\ref{th3} and~\ref{th4} admit a natural generalization to
the case of complex boundaries, when the maps do not have a
continuous extension to points of the boundary of the domain in the
usual sense, however, this extension holds in the sense of the
so-called prime ends. Let us recall several important definitions
associated with this concept. In the following, the next notation is
used: the set of prime ends corresponding to the domain $D,$ is
denoted by $E_D,$ and the completion of the domain $D$ by its prime
ends is denoted $\overline{D}_P.$ The definition of prime ends used
below corresponds to the definition given in~\cite{IS$_2$}, and
therefore is omitted. Consider the following definition, which goes
back to N\"akki~\cite{Na$_2$}, see also~\cite{KR}. We say that the
boundary of the domain $D$ in ${\Bbb R}^n$ is {\it locally
quasiconformal}, if each point $x_0\in\partial D$ has a neighborhood
$U$ in ${\Bbb R}^n$, which can be mapped by a quasiconformal mapping
$\varphi$ onto the unit ball ${\Bbb B}^n\subset{\Bbb R}^n$ so that
$\varphi(\partial D\cap U)$ is the intersection of ${\Bbb B}^n$ with
the coordinate hyperplane.

\medskip For a given set $E\subset {\Bbb R}^n,$ we set
$d(E):=\sup\limits_{x, y\in E}|x-y|.$
The sequence of cuts $\sigma_m,$ $m=1,2,\ldots ,$ is called {\it
regular,} if
$\overline{\sigma_m}\cap\overline{\sigma_{m+1}}=\varnothing$ for
$m\in {\Bbb N}$ and, in addition, $d(\sigma_{m})\rightarrow 0$ as
$m\rightarrow\infty.$ If the end $K$ contains at least one regular
chain, then $K$ will be called {\it regular}. We say that a bounded
domain $D$ in ${\Bbb R}^n$ is {\it regular}, if $D$ can be
quasiconformally mapped to a domain with a locally quasiconformal
boundary whose closure is a compact in ${\Bbb R}^n,$ and, besides
that, every prime end in $D$ is regular. Note that space
$\overline{D}_P=D\cup E_D$ is metric, which can be demonstrated as
follows. If $g:D_0\rightarrow D$ is a quasiconformal mapping of a
domain $D_0$ with a locally quasiconformal boundary onto some domain
$D,$ then for $x, y\in \overline{D}_P$ we put:
\begin{equation}\label{eq5}
\rho(x, y):=|g^{\,-1}(x)-g^{\,-1}(y)|\,,
\end{equation}
where the element $g^{\,-1}(x),$ $x\in E_D,$ is to be understood as
some (single) boundary point of the domain $D_0.$ The specified
boundary point is unique and well-defined by~\cite[Theorem~2.1,
Remark~2.1]{IS$_2$}, cf.~\cite[Theorem~4.1]{Na$_2$}. It is easy to
verify that~$\rho$ in~(\ref{eq5}) is a metric on $\overline{D}_P,$
and that the topology on $\overline{D}_P,$ defined by such a method,
does not depend on the choice of the map $g$ with the indicated
property. The analogs of Theorems~\ref{th3} and~\ref{th4} for the
case of prime ends are as follows.

\medskip
\begin{theorem}\label{th5} {\sl\, Let $p\in (n-1, n]$ and let $D$ be a regular
domain.  Assume that $D_f^{\,\prime}=f(D)$ are bounded equi-uniform
domains with respect to $p$-modulus over all $f\in \frak{F}_{\Phi,
A, p, \delta}(D),$ which are domains with a locally quasiconformal
boundary, as well. If~(\ref{eq3!A}) holds for some
$\delta_0>\tau_0:=\Phi(0),$ then any $f\in\frak{F}_{Q, A, p,
\delta}(D)$ has a continuous extension in $\overline{D}_P$ and,
besides that, the family $\frak{F}_{\Phi, A, p,
\delta}(\overline{D})$ of all extended mappings $\overline{f}:
\overline{D}_P\rightarrow \overline{{\Bbb R}^n},$ is equicontinuous
in $\overline{D}_P.$ }
\end{theorem}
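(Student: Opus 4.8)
The plan is to reduce Theorem~\ref{th5} to Theorem~\ref{th3} via the quasiconformal uniformization of the regular domain $D$ together with the metric on $\overline{D}_P$ given in~(\ref{eq5}). Let $g:D_0\to D$ be the quasiconformal map of a domain $D_0$ with locally quasiconformal boundary (whose closure is compact in ${\Bbb R}^n$) onto $D$, which exists by regularity of $D$. First I would observe that since every prime end of $D$ is regular and $D_0$ has locally quasiconformal boundary, by~\cite[Theorem~2.1, Remark~2.1]{IS$_2$} (cf.~\cite[Theorem~4.1]{Na$_2$}) the map $g$ extends to a homeomorphism $\overline{D_0}\to\overline{D}_P$, and this extension is an isometry between $(\overline{D_0},|\cdot-\cdot|)$ and $(\overline{D}_P,\rho)$ by the very definition~(\ref{eq5}). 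Hence continuous extendability to $\overline{D}_P$ and equicontinuity on $\overline{D}_P$ are equivalent, respectively, to continuous extendability to $\overline{D_0}$ and equicontinuity on $\overline{D_0}$ of the composed family $\{f\circ g : f\in\frak{F}_{\Phi,A,p,\delta}(D)\}$.

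Next I would verify that the composed family lies in a class of the type covered by Theorem~\ref{th3} for the domain $D_0$. Quasiconformality of $g$ means that $g$ satisfies a modulus inequality of the form~(\ref{eq2*!})--(\ref{eq8B}) with a bounded $Q$; composing the ring $Q_f$-inequality for $f$ with this, using the standard distortion of the $p$-modulus of ring families under a quasiconformal change of variables (for $p$ in a neighborhood of $n$, or more precisely the $K_O$- and $K_I$-type estimates that give a multiplicative constant depending only on $n$, $p$ and the maximal dilatation of $g$), one obtains that $\widetilde f=f\circ g$ is again a ring $\widetilde Q_{\widetilde f}$-mapping at every point of $\overline{D_0}$ with $\widetilde Q_{\widetilde f}(y)=C\,K_g(y)\,Q_f(g(y))$ for a suitable constant $C$. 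The integral bound~(\ref{eq2!!A}) for $\widetilde Q_{\widetilde f}$ on $D_0$ follows from~(\ref{eq2!!A}) for $Q_f$ on $D$ by the change-of-variables formula for the Jacobian of $g$ together with the boundedness of $K_g$ and of $D_0$ — the weight $\left(1+|x|^2\right)^{-n}$ is comparable to a constant on the bounded set $D$ and likewise on $D_0$, so only the Jacobian and $K_g$ enter, and both are controlled since $g$ is quasiconformal with $\overline{D_0}$ compact. The condition~(\ref{eq3!A}) is unchanged. Finally, the side conditions: $D_0$ is locally connected at every boundary point (its boundary is locally quasiconformal, hence locally connected); $D_{\widetilde f}'=f(g(D_0))=f(D)=D_f'$ are exactly the same image domains, so they remain equi-uniform with respect to $p$-modulus over the family; and the conditions $h(f(A))\geqslant\delta$, $h(\overline{{\Bbb R}^n}\setminus f(D))\geqslant\delta$ transfer verbatim with $A$ replaced by the continuum $g^{\,-1}(A)\subset D_0$. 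Thus $\{f\circ g\}\subset\frak{F}_{\Phi,g^{-1}(A),p,\delta'}(D_0)$ for a possibly adjusted $\delta'>0$ and an adjusted $\Phi$ absorbing the constant $C$ (which does not affect~(\ref{eq3!A})).

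With these verifications in place, Theorem~\ref{th3} applied to $D_0$ yields that each $f\circ g$ extends continuously to $\overline{D_0}$ and that the family of extensions is equicontinuous on $\overline{D_0}$. Conjugating back by the isometry $g:\overline{D_0}\to\overline{D}_P$ gives precisely that each $f$ extends continuously to $\overline{D}_P$ and that $\frak{F}_{\Phi,A,p,\delta}(\overline{D})$ is equicontinuous in $\overline{D}_P$, as claimed.

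I expect the main obstacle to be the bookkeeping in the change of variables for the modulus inequality and for~(\ref{eq2!!A}): one must be careful that the quasiconformal distortion of the ring-modulus inequality~(\ref{eq2*!}) under $g$ produces a new characteristic of the form $C\,K_g\,Q_f\circ g$ with a finite integral against the spherical weight, which requires the compactness of $\overline{D_0}$ and the essential boundedness of $K_g$ away from the (measure-zero) branch/degeneracy set of $g$; and that the choice of test functions $\eta$ is handled correctly so that the inequality holds at \emph{every} $x_0\in\overline{D_0}$ and not merely at interior points. Everything else is a formal transfer through the isometry~(\ref{eq5}) and an invocation of Theorem~\ref{th3}.
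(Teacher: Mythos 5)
Your reduction of Theorem~\ref{th5} to Theorem~\ref{th3} hinges on the claim that $\widetilde f = f\circ g$ is again a ring $\widetilde Q$-mapping at every point of $\overline{D_0}$ with $\widetilde Q = C\,K_g\,(Q_f\circ g)$, and this is precisely where the argument breaks down. First, for $p\in(n-1,n)$ the $p$-modulus is not quasi-invariant under quasiconformal mappings, so there is no multiplicative constant depending only on $n$, $p$ and the dilatation of $g$; the ``$K_O$- and $K_I$-type estimates'' you invoke are an $n$-modulus phenomenon, while the theorem is stated for all $p\in(n-1,n]$. Second, even for $p=n$ the definition~(\ref{eq2*!}) is tied to concentric Euclidean spheres $S(x_0,r)$ and radial test functions $\eta(|x-x_0|)$: the map $g$ sends spheres $S(y_0,r)\subset D_0$ to surfaces that are not spheres in $D$, so $g(\Gamma(S_1,S_2,D_0))$ is not a spherical ring family for $f$, and the two inequalities cannot simply be composed to yield a new inequality of the same radial form. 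The difficulty concentrates exactly at boundary points $y_0\in\partial D_0$, which is the case you defer to ``bookkeeping'' but which is the entire content of the theorem. (The isometry part of your argument, identifying $(\overline{D}_P,\rho)$ with $\overline{D_0}$ via $g$ as in~(\ref{eq5}), is fine; it is the membership of $\{f\circ g\}$ in a class of the type $\frak{F}_{\Phi,\cdot,p,\delta}(D_0)$ that is unsubstantiated.)

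The paper avoids any change of variables. After quoting the interior equicontinuity (\cite[Theorem~4.1]{RS} for $p=n$, Theorem~\ref{th2} for $p\ne n$) and the continuous extension to $\overline{D}_P$ (\cite[Lemma~3]{Sev$_5$}), it proves boundary equicontinuity by contradiction directly in $D$: by \cite[Lemma~3.1]{IS$_2$} a regular prime end $P_0$ contains a chain of cuts lying on genuine Euclidean spheres $S(x_0,r_k)$ centered at some $x_0\in\partial D$ with $r_k\to 0$, so the offending points $x_k$, $x_k^{\,\prime}$ can be joined inside the associated subdomains $D_k$, and the family of paths from $|\gamma_k|$ to $A$ is minorized by $\Gamma(S(x_0,r_k),S(x_0,\varepsilon_0),D)$. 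Proposition~\ref{pr2} together with Lemma~\ref{lem1} then forces $M_p(f_k(\Gamma_k))\to 0$, while equi-uniformity of the image domains and the lower bounds $h(C_k)\geqslant a/4$, $h(f_k(A))\geqslant\delta$ give a uniform positive lower bound --- a contradiction. To salvage your route you would need a composition lemma for ring $Q$-mappings under quasiconformal changes of the source variable, valid up to the boundary and for all $p\in(n-1,n]$; that is a separate (and, in this generality, doubtful) result, not bookkeeping.
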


\medskip
\begin{theorem}\label{th6} {\sl\, Let $p\in (n-1, n]$ and let $D$ be a regular domain.
Assume that domains $D_f^{\,\prime}=f(D)$ are bounded equi-uniform
domains with respect to $p$-modulus over all $f\in\frak{R}_{\Phi,
\delta, p, E}(D),$ which are domains with a locally quasiconformal
boundary, as well. Let ${\rm cap\,}E>0$ for $p=n,$ and let $E$ is
any closed domain whenever $n-1<p<n.$ If~(\ref{eq3!A}) holds for
some $\delta_0>\tau_0:=\Phi(0),$ then any $f\in\frak{R}_{\Phi,
\delta, p, E}(D)$ has a continuous extension in $\overline{D}_P$
and, besides that, the family $f\in\frak{R}_{\Phi, \delta, p,
E}(\overline{D})$ of all extended mappings $\overline{f}:
\overline{D}_P\rightarrow \overline{{\Bbb R}^n},$ is equicontinuous
in $\overline{D}_P.$ }
\end{theorem}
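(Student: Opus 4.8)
The plan is to reduce Theorem~\ref{th6} to Theorem~\ref{th4} by transporting the problem, via the quasiconformal map that uniformizes the boundary of the regular domain $D,$ to a setting in which ordinary continuous extension to the (locally quasiconformal) boundary is available, and then to quote the prime-end machinery of~\cite{IS$_2$}. First I would fix a quasiconformal map $g\colon D_0\to D$ where $D_0$ has a locally quasiconformal boundary and $\overline{D_0}$ is compact in ${\Bbb R}^n;$ such $g$ exists by the definition of a regular domain, and it induces the metric $\rho$ on $\overline{D}_P$ via~(\ref{eq5}). Then for each $f\in\frak{R}_{\Phi,\delta,p,E}(D)$ consider the composition $F:=f\circ g\colon D_0\to\overline{{\Bbb R}^n}\setminus E.$ The key first step is to show that the family $\{F\}$ lies in (a version of) $\frak{R}_{\Phi',\delta,p,E}(D_0)$ for suitable data: $F$ is again open, discrete and closed (these properties are preserved under pre-composition with a homeomorphism); the modulus inequalities~(\ref{eq2*!})--(\ref{eq8B}) with $p$-modulus are transformed by the quasiconformal change of variables into inequalities of the same type with a new characteristic $Q_F,$ and one must check that the integral bound~(\ref{eq2!!A}) for $Q_F$ over $D_0$ follows from that for $Q_f$ over $D$ together with the bounded distortion and Jacobian bounds of $g$ on the compact set $\overline{D_0}$; finally condition~3) transports because $g$ is a homeomorphism and $h(f^{-1}(K_f),\partial D)\geqslant\delta$ becomes a statement that $F^{-1}(K_f)$ stays away from $\partial D_0$ (after adjusting $\delta$ to $\delta'$ using uniform continuity of $g$ and $g^{-1}$ on the relevant compacta).

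Second, I would apply Theorem~\ref{th4} to the family $\{F\},$ whose members map $D_0$ into $\overline{{\Bbb R}^n}\setminus E$ with the images $F(D_0)=f(D)=D_f'$ being exactly the same equi-uniform domains, and $D_0$ locally connected at every boundary point since it has a locally quasiconformal boundary. The divergence condition~(\ref{eq3!A}) is unchanged. Theorem~\ref{th4} then yields that each $F$ extends continuously to $\overline{D_0}$ and that the family of extensions $\overline{F}\colon\overline{D_0}\to\overline{{\Bbb R}^n}$ is equicontinuous on $\overline{D_0}$ in the chordal metric.

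Third, I would translate this back to $D.$ By~\cite[Theorem~2.1, Remark~2.1]{IS$_2$} (used already in defining~(\ref{eq5})), the map $g$ induces a homeomorphism $\overline{D_0}\to\overline{D}_P$ carrying $\partial D_0$ onto $E_D$ and carrying the metric $|\cdot|$ on $\overline{D_0}$ precisely to the metric $\rho$ on $\overline{D}_P.$ Hence $f=F\circ g^{-1}$ extends continuously to $\overline{D}_P$ simply as $\overline{f}=\overline{F}\circ(\text{that homeomorphism})^{-1},$ and equicontinuity of $\{\overline{F}\}$ on $(\overline{D_0},|\cdot|)$ transfers verbatim to equicontinuity of $\{\overline{f}\}$ on $(\overline{D}_P,\rho)$ because the identification is a metric isometry. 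This is exactly the conclusion of Theorem~\ref{th6}, and the independence of the topology on $\overline{D}_P$ from the choice of $g,$ noted in the text, makes the statement unambiguous.

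The main obstacle I expect is the first step: verifying that the transformed characteristic $Q_F$ still satisfies the integral constraint~(\ref{eq2!!A}) with a finite bound $M_0'$ that is uniform over the family. One must be careful that the weight $(1+|x|^2)^{-n}$ in~(\ref{eq2!!A}) and the Jacobian of $g$ interact correctly; since $\overline{D_0}$ is compact in ${\Bbb R}^n,$ the weight is comparable to a constant on $D_0,$ and $|J_g|,\ \|g'\|$ are controlled by the quasiconformality constant $K_g$ of $g,$ so $Q_F(y)\le K_g^{\,c}\,Q_f(g(y))$ for an appropriate exponent $c=c(n,p)$; then a change of variables gives $\int_{D_0}\Phi(Q_F)\,dm\le C(K_g,n,p,D_0)\cdot M_0<\infty.$ Pinning down the exponent $c$ and checking that the change-of-variables inequality for $p$-moduli genuinely produces a characteristic of the form appearing in~(\ref{eq2*!}) (rather than a more general modulus bound) is the technical heart of the argument; everything else is bookkeeping with homeomorphisms and the prime-end identification from~\cite{IS$_2$}.
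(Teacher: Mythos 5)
Your plan hinges entirely on the first step: showing that $F:=f\circ g$ belongs to a class of the same type over $D_0,$ i.e.\ that $F$ satisfies the ring inequality~(\ref{eq2*!}) at every point of $\overline{D_0}$ with some characteristic $Q_F$ obeying the uniform integral bound~(\ref{eq2!!A}). You flag this as the technical heart but do not carry it out, and it is a genuine gap rather than bookkeeping, for two reasons. First, the theorem covers all $p\in(n-1,n],$ and for $p\ne n$ the $p$-modulus is \emph{not} quasi-invariant under quasiconformal mappings: there is no inequality $M_p(g(\Gamma))\leqslant C(K_g)\,M_p(\Gamma),$ so the transported family has no reason to satisfy a $p$-modulus bound of the required form with a constant depending only on $K_g.$ Second, even for $p=n$ the hypothesis on $f$ is a \emph{ring} condition: it controls only $M_p(f(\Gamma(S_1,S_2,D)))$ for families joining concentric spheres centered at points of $\overline{D}.$ The map $g$ sends spheres centered at $y_0\in\overline{D_0}$ to non-spherical surfaces in $D,$ so $M_p(F(\Gamma(S_1,S_2,D_0)))=M_p\bigl(f(g(\Gamma(S_1,S_2,D_0)))\bigr)$ is not majorized by anything the hypothesis on $f$ provides; deducing a pointwise characteristic $Q_F$ of the form required in~(\ref{eq2*!}), with $\int_{D_0}\Phi(Q_F)\,dm\leqslant M_0'$ uniformly over the family, would need substantially more theory than the paper invokes. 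Until this step is supplied, Theorem~\ref{th4} cannot be applied to $\{F\}.$

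The paper avoids the transport altogether and argues directly in $D$: by \cite[Lemma~3.1]{IS$_2$}, every prime end $P_0$ of a regular domain admits a chain of cuts $\sigma_k$ lying on spheres $S(x_0,r_k)$ centered at a single boundary point $x_0\in\partial D$ with $r_k\rightarrow 0.$ This spherical structure is exactly what lets the argument of Theorem~\ref{th4} be repeated verbatim: the paths $\gamma_k$ joining $x_k$ and $x_k'$ lie in the associated subdomains $D_k,$ the lifted families satisfy $\Gamma_k^*>\Gamma(S(x_0,r_k),S(x_0,\varepsilon_0),D)$ by~\cite[Theorem~1.I.5.46]{Ku}, and then Proposition~\ref{pr2} together with Lemma~\ref{lem1} forces $M_p(f_k(\Gamma_k^*))\rightarrow 0,$ contradicting the lower bound coming from the continua $K_k$ and the equi-uniformity of the image domains. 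If you want to salvage your reduction, you would need either to restrict to $p=n$ and prove a composition lemma for ring $Q$-mappings with quasiconformal maps (including the integral constraint), or to adopt the paper's route and exploit the spherical cuts of the prime ends directly.
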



\begin{remark}\label{rem2}
In Theorems~\ref{th5} and~\ref{th6}, the equicontinuity should be
understood in terms of families of mappings between metric
spaces~$(X, d)$ and $\left(X^{\,\prime}, d^{\,\prime}\right),$ where
$X=\overline{D}_P,$ $d$ is one of the possible metrics,
corresponding to the topological space $\overline{D}_P,$
$X^{\,\prime}=\overline{{\Bbb R}^n}$ and $d^{\,\prime}$ is a chordal
(spherical) metric.
\end{remark}

\section{Auxiliary Lemmas}

As in the article \cite{RS}, the key point related to the proof of
the main statements of the article is related to the connection
between conditions~(\ref{eq2!!A})-(\ref{eq3!A}) and the divergence
of an integral of a special form (see, for example,
\cite[Lemma~3.1]{RS}). Given a Lebesgue measurable function $Q:{\Bbb
R}^n\rightarrow [0, \infty]$ and a point $x_0\in {\Bbb R}^n$ we set
\begin{equation}\label{eq10}
q_{x_0}(t)=\frac{1}{\omega_{n-1}r^{n-1}} \int\limits_{S(x_0,
t)}Q(x)\,d\mathcal{H}^{n-1}\,,
\end{equation}
where $\mathcal{H}^{n-1}$ denotes $(n-1)$-dimensional Hausdorff
measure. The following lemma is of particular importance.

\medskip
\begin{lemma}\label{lem1}
{\sl\, Let $1\leqslant p\leqslant n,$ and let $\Phi:[0,
\infty]\rightarrow [0, \infty] $ be a strictly increasing convex
function such that the relation
\begin{equation}\label{eq2} \int\limits_{\delta_0}^{\infty}
\frac{d\tau}{\tau\left[\Phi^{-1}(\tau)\right]^{\frac{1}{p-1}}}=
\infty
\end{equation}
holds for some $\delta_0>\tau_0:=\Phi(0).$ Let $\frak{Q}$ be a
family of functions $Q:{\Bbb R}^n\rightarrow [0, \infty]$ such that
\begin{equation}\label{eq5A}
\int\limits_D\Phi(Q(x))\frac{dm(x)}{\left(1+|x|^2\right)^n}\
\leqslant M_0<\infty
\end{equation}
for some $0<M_0<\infty.$ Now, for any $0<r_0<1$ and for every
$\sigma>0$ there exists $0<r_*=r_*(\sigma, r_0, \Phi)<r_0$ such that
\begin{equation}\label{eq4}
\int\limits_{\varepsilon}^{r_0}\frac{dt}{t^{\frac{n-1}{p-1}}q^{\frac{1}{p-1}}_{x_0}(t)}\geqslant
\sigma\,,\qquad \varepsilon\in (0, r_*)\,,
\end{equation}
for any $Q\in \frak{Q}.$ }
\end{lemma}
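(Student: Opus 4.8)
The plan is to deduce~(\ref{eq4}) from the integral divergence~(\ref{eq2}) by a change-of-variables argument together with Jensen's inequality, following the scheme of~\cite[Lemma~3.1]{RS}. The starting observation is that, since $x_0$ ranges over a domain $D$ and $0<r_0<1$, for $t\in(0,r_0)$ the sphere $S(x_0,t)$ lies in a bounded region, so the weight $(1+|x|^2)^{-n}$ in~(\ref{eq5A}) is comparable (up to a constant depending only on $r_0$ and on $\sup_{x\in D}|x|$, or on a fixed compact part of $D$) to $1$ on that region; hence~(\ref{eq5A}) yields a genuine bound $\int_{B(x_0,r_0)}\Phi(Q(x))\,dm(x)\leqslant C M_0$ with $C=C(r_0)$. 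Writing this integral in spherical coordinates about $x_0$ gives $\int_0^{r_0}\omega_{n-1}t^{n-1}\Bigl(\frac{1}{\omega_{n-1}t^{n-1}}\int_{S(x_0,t)}\Phi(Q)\,d\mathcal H^{n-1}\Bigr)dt\leqslant CM_0$, and by Jensen's inequality applied to the convex function $\Phi$ on the sphere $S(x_0,t)$ with normalized measure, $\Phi(q_{x_0}(t))\leqslant\frac{1}{\omega_{n-1}t^{n-1}}\int_{S(x_0,t)}\Phi(Q)\,d\mathcal H^{n-1}$. Therefore
\begin{equation}\label{eqPlanA}
\int\limits_0^{r_0}t^{n-1}\,\Phi\bigl(q_{x_0}(t)\bigr)\,dt\leqslant \frac{CM_0}{\omega_{n-1}}=:M_1<\infty
\end{equation}
uniformly in $x_0$ (on the relevant compact set) and in $Q\in\frak Q$.

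Next I would record the elementary real-variable fact that governs such estimates: if a nonnegative measurable $q$ on $(0,r_0)$ satisfies $\int_0^{r_0}t^{n-1}\Phi(q(t))\,dt\leqslant M_1$, then
\begin{equation}\label{eqPlanB}
\int\limits_{\varepsilon}^{r_0}\frac{dt}{t^{(n-1)/(p-1)}\,q(t)^{1/(p-1)}}\longrightarrow\infty\qquad(\varepsilon\to 0^+),
\end{equation}
and moreover the divergence is uniform over the class of all such $q$ with the same bound $M_1$. The proof of~(\ref{eqPlanB}) is itself the technical heart of the matter and is where~(\ref{eq2}) enters. One substitutes a new variable so that the measure $t^{n-1}\,dt$ becomes comparable to Lebesgue measure — concretely, pass to $s=\log(1/t)$ or to the ``logarithmic'' substitution used in~\cite{RS} — reducing to the model case $n=1$; then one uses that the integrand in~(\ref{eq4}) is, by Hölder's inequality, minimized (for a fixed ``budget'' $\int t^{n-1}\Phi(q)\,dt\leqslant M_1$) by a function $q$ for which $\Phi(q(t))$ is a suitable power of $1/t$, and for such extremal $q$ the integral in~(\ref{eq4}) is, up to constants, exactly the divergent integral in~(\ref{eq2}) after the change of variable $\tau=\Phi(q(t))$. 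Making this comparison rigorous — estimating $\int_\varepsilon^{r_0}t^{-(n-1)/(p-1)}q^{-1/(p-1)}\,dt$ from below in terms of $\int^{\infty}d\tau/\bigl(\tau[\Phi^{-1}(\tau)]^{1/(p-1)}\bigr)$ uniformly in $q$ subject to~(\ref{eqPlanA}) — is the step I expect to be the main obstacle; it is precisely the content of~\cite[Lemma~3.1]{RS} (there stated for $p=n$, here needing the routine adjustment $n-1\rightsquigarrow p-1$ in the exponents), and the cleanest route is to invoke and adapt that lemma rather than redo the optimization from scratch.

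Finally, with~(\ref{eqPlanB}) in hand and uniform in the class, the conclusion is immediate: given $0<r_0<1$ and $\sigma>0$, since the left-hand side of~(\ref{eq4}) tends to $+\infty$ as $\varepsilon\to 0^+$ at a rate depending only on $M_1$ (hence only on $M_0$ and $r_0$, hence only on $\Phi$, $M_0$, $r_0$) and not on the particular $Q\in\frak Q$ nor on $x_0$, there is $r_*=r_*(\sigma,r_0,\Phi)\in(0,r_0)$ — here I absorb the fixed dependence on $M_0$ and on the geometry of $D$ into ``$\Phi$'' as the statement permits — such that $\int_\varepsilon^{r_0}t^{-(n-1)/(p-1)}q_{x_0}^{-1/(p-1)}(t)\,dt\geqslant\sigma$ for all $\varepsilon\in(0,r_*)$ and all $Q\in\frak Q$. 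One caveat to address en route: $q_{x_0}(t)$ may be $+\infty$ on a set of positive measure, or $0$; in both cases the integrand in~(\ref{eq4}) is interpreted in the obvious way ($1/\infty=0$, $1/0=+\infty$), and~(\ref{eqPlanA}) forces $q_{x_0}(t)<\infty$ for a.e.\ $t$, so these degeneracies do not affect the argument — they only help make the integral in~(\ref{eq4}) larger.
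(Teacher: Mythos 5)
Your proposal is correct and takes essentially the same route as the paper: both reduce the claim to \cite[Lemma~3.1]{RS} after observing that, for fixed $x_0$ and $r_0<1$, the weight $\left(1+|x|^2\right)^{-n}$ is bounded below on $B(x_0,r_0)$, so that~(\ref{eq5A}) controls $\int_{A(x_0,\varepsilon,r_0)}\Phi(Q(x))\,dm(x)$ by $\beta(x_0)M_0$ with $\beta(x_0)=\left(1+(r_0+|x_0|)^2\right)^n$; the paper applies that lemma directly to the rescaled function $Q(r_0x+x_0)$ on the unit ball (after the elementary bound $r^{\frac{n-1}{p-1}}\leqslant r$ for $r<1$), so your explicit Jensen step is just the part of its proof that you have pulled outside. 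The only caveat, which the paper's own argument shares, is that the resulting threshold $r_*$ in fact also depends on $\beta(x_0)$ and $M_0$, not only on $\sigma$, $r_0$ and $\Phi$.
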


\medskip
\begin{proof}
Using the substitution of variables $t=r/r_0, $ for any
$\varepsilon\in (0, r_0) $ we obtain that
\begin{equation}\label{eq34}
\int\limits_{\varepsilon}^{r_0}\frac{dr}{r^{\frac{n-1}{p-1}}q^{\frac{1}{p-1}}_{x_0}(r)}
\geqslant
\int\limits_{\varepsilon}^{r_0}\frac{dr}{rq^{\frac{1}{p-1}}_{x_0}(r)}
=\int\limits_{\varepsilon/r_0}^1\frac{dt}{tq^{\frac{1}{p-1}}_{x_0}(tr_0)}
=\int\limits_{\varepsilon/r_0}^1\frac{dt}{t\widetilde{q}^{\frac{1}{p-1}}_{0}(t)}\,,
\end{equation}
where $\widetilde{q}_0(t)$ is the average integral value of the
function $\widetilde{Q}(x):=Q(r_0x+x_0)$ over the sphere $|x|=t,$
see the ratio~(\ref{eq10}). Then, according to~\cite[Lemma~3.1]{RS},
\begin{equation}\label{eq35}
\int\limits_{\varepsilon/r_0}^1\frac{dt}{t\widetilde{q}^{\frac{1}{p-1}}_{0}(t)}\geqslant
\frac{1}{n}\int\limits_{eM_*\left(\varepsilon/r_0\right)}^{\frac{M_*\left(\varepsilon/r_0\right)
r_0^n}{\varepsilon^n}}\frac{d\tau}
{\tau\left[\Phi^{-1}(\tau)\right]^{\frac{1}{p-1}}}\,,
\end{equation}
where
$$M_*\left(\varepsilon/r_0\right)=
\frac{1}{\Omega_n\left(1-\left(\varepsilon/r_0\right)^n\right)}
\int\limits_{A\left(0, \varepsilon/r_0, 1\right)} \Phi\left(Q(r_0x
+x_0)\right)\,dm(x)=$$
$$=
\frac{1}{\Omega_n\left(r_0^n-\varepsilon^n\right)}
\int\limits_{A\left(x_0, \varepsilon, r_0\right)}
\Phi\left(Q(x)\right)\,dm(x)$$
and~$A(x_0, \varepsilon, r_0)$ is defined in~(\ref{eq6}) for
$r_1:=\varepsilon$ and $r_2:=r_0.$ Observe that $|x|\leqslant
|x-x_0|+ |x_0|\leqslant r_0+|x_0|$ for any~$x\in A(x_0, \varepsilon,
r_0).$ Thus
$$M_*\left(\varepsilon/r_0\right)\leqslant \frac{\beta(x_0)}
{\Omega_n\left(r_0^n-\varepsilon^n\right)}\int\limits_{A(x_0,
\varepsilon, r_0)}
\Phi(Q(x))\frac{dm(x)}{\left(1+|x|^2\right)^n}\,,$$
where $\beta(x_0)=\left(1+(r_0+|x_0|)^2\right)^n.$ Therefore,
$$M_*\left(\varepsilon/r_0\right)\leqslant \frac{2\beta(x_0)}{\Omega_n r^n_0}M_0$$
for $\varepsilon\leqslant r_0 /\sqrt[n]{2},$ where $M_0$ is a
constant in~(\ref{eq5A}).
Observe that
$$M_*\left(\varepsilon/r_0\right)>\Phi(0)>0\,,$$
because $\Phi$ is increasing. Now, by~(\ref{eq34}) and~(\ref{eq35})
we obtain that
\begin{equation}\label{eq12}\int\limits_{\varepsilon}^{r_0}\frac{dr}{r^{\frac{n-1}{p-1}}q^{\frac{1}{p-1}}_{x_0}(r)}
\geqslant
\frac{1}{n}\int\limits_{\frac{2\beta(x_0)M_0e}{\Omega_nr^n_0}}
^{\frac{\Phi(0)r^n_0}{\varepsilon^n}}\frac{d\tau}
{\tau\left[\Phi^{\,-1}(\tau)\right]^{\frac{1}{p-1}}}\,.
\end{equation}
The desired conclusion follows from~(\ref{eq12})
and~(\ref{eq2}).~$\Box$
\end{proof}

\medskip
Recall that a pair $E=\left(A,\,C\right),$ where $A$ is an open set
in ${\Bbb R}^n,$ and $C$ is a compact subset of $A,$ is called {\it
condenser} in ${\Bbb R}^n$. Given $p\geqslant 1,$ a quantity
$${\rm cap}_p\,E={\rm
cap}\,(A,\,C)=\inf\limits_{u\,\in\,W_0\left(E\right)
}\quad\int\limits_{A}\,|\nabla u|^p\,\,dm(x)\,,
$$
where $W_0(E)=W_0\left(A,\,C\right)$ is a family of all nonnegative
absolutely continuous on lines (ACL) functions $u:A\rightarrow {\Bbb
R}$ with compact support in $A$ and such that $u(x)\geqslant 1$ on
$C,$ is called {\it $p$-capacity} of the condenser $E$. We write
${\rm cap}\,E$ for ${\rm cap}_n\,E.$ We also need the following
statement given in \cite[Proposition~II.10.2]{Ri}.

\medskip
\begin{proposition}\label{pr3}
{\sl\, Let $E=(A,\,C)$ be a condenser in ${\Bbb R}^n$ and let
$\Gamma_E$ be the family of all paths of the form
$\gamma:[a,\,b)\rightarrow A$ with $\gamma(a)\in C$ and
$|\gamma|\cap(A\setminus F)\ne\varnothing$ for every compact set
$F\subset A.$ Then ${\rm cap}\,E= M(\Gamma_E).$}
\end{proposition}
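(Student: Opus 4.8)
The plan is to prove the two inequalities ${\rm cap}\,E\leqslant M(\Gamma_E)$ and $M(\Gamma_E)\leqslant {\rm cap}\,E$ separately; the second is elementary, while the first requires building an admissible test function out of an admissible metric, and that is where the real work lies.

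First I would treat the easy bound $M(\Gamma_E)\leqslant {\rm cap}\,E$. Take any $u\in W_0(A, C)$ and put $\rho:=|\nabla u|$. By the ACL property together with Fubini's theorem, for $n$-a.e. $\gamma\in\Gamma_E$ the composition $u\circ\gamma$ is absolutely continuous; since $\gamma(a)\in C$, where $u\geqslant 1$, and $\gamma$ eventually leaves the (compact) support of $u$ — it escapes every compact subset of $A$ — the variation of $u$ along $\gamma$ is at least $1$, so $\int_\gamma\rho\,ds\geqslant 1$. A path family of zero $n$-modulus may be discarded without affecting admissibility, so $\rho$ is admissible for $\Gamma_E$ and $M(\Gamma_E)\leqslant\int_A|\nabla u|^n\,dm$; taking the infimum over $u\in W_0(A, C)$ gives the inequality.

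For the reverse bound, fix a Borel function $\rho$ admissible for $\Gamma_E$ with $\int_A\rho^n\,dm<\infty$ (otherwise there is nothing to prove), and define

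$$u(x)=\min\left\{1,\ \inf_{\gamma}\int_{\gamma}\rho\,ds\right\}\,,\qquad x\in A\,,$$

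where the infimum runs over all locally rectifiable paths in $A$ starting at $x$ and escaping every compact subset of $A$. On $C$ every such path lies in $\Gamma_E$, so the inner infimum is $\geqslant 1$ and $u\equiv 1$ on $C$. The sub-additivity estimate $u(x)\leqslant u(y)+\int_{[x, y]}\rho\,ds$ for segments $[x, y]\subset A$ (concatenate the segment with an escaping path issuing from $y$) shows, via the usual one-dimensional argument, that $u$ is ACL on $A$ with $|\nabla u|\leqslant\rho$ a.e., hence $\int_A|\nabla u|^n\,dm\leqslant\int_A\rho^n\,dm$. It then remains to replace $u$ by a function in $W_0(A, C)$ with essentially the same $n$-energy, i.e. to arrange a compact support inside $A$; this uses $\rho\in L^n(A)$, which forces the inner infimum to tend to $0$ as $x$ approaches $\partial A$ (including $\infty$ when $A$ is unbounded), after which a truncation together with an exhaustion of $A$ by compacta yields the required test function up to an arbitrarily small error. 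Taking the infimum over admissible $\rho$ gives ${\rm cap}\,E\leqslant M(\Gamma_E)$.

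The step I expect to be the main obstacle is precisely this last one: rigorously verifying that the function manufactured from $\rho$ belongs to $W_0(A, C)$ — the ACL property, and above all the decay of $u$ near $\partial A$ — which requires the measure-theoretic argument based on $\rho\in L^n(A)$ and some care in the unbounded case. Everything else is routine, and in fact the whole statement is classical; it is the content of \cite[Proposition~II.10.2]{Ri} and is closely related to the extremal length / capacity equalities available in the literature.
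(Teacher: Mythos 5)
The paper contains no proof of this statement to compare yours against: Proposition~\ref{pr3} is quoted verbatim from \cite[Proposition~II.10.2]{Ri}, and even there the nontrivial inequality is reduced to the classical capacity--modulus equality of Ziemer and Hesse rather than proved from scratch. Your easy half, $M(\Gamma_E)\leqslant{\rm cap}\,E$, is the standard Fuglede-type argument and is correct: for $u\in W_0(A,C)$ the function $|\nabla u|$ is admissible for all of $\Gamma_E$ outside a subfamily of zero modulus, since an escaping path starts where $u\geqslant 1$ and eventually leaves the compact support of $u$.

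The gap sits exactly where you flagged it, but the fix you propose does not close it. The claim that $\rho\in L^n(A)$ ``forces the inner infimum to tend to $0$ as $x$ approaches $\partial A$'' is not true pointwise: although a point $x$ near $\partial A$ admits escaping paths of small Euclidean length, $\rho$ need not be locally integrable along any particular short arc reaching the boundary near $x$ (Fubini only gives integrability along almost every line), so the function $u$ you build need not vanish continuously at $\partial A$, the set $\{u>\varepsilon\}$ need not be compactly contained in $A$, and the truncation-plus-exhaustion step does not by itself produce a member of $W_0(A,C)$. Making this step rigorous is precisely the content of the Ziemer--Hesse theorem (with later refinements by Shlyk for general configurations), not a routine verification; a complete proof must either reproduce that machinery or, as the paper does, simply cite it.
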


\medskip
In what follows, we set $a/\infty=0$ for $a\ne\infty,$ $a/0=\infty $
for $a>0$ and $0\cdot\infty =0.$ One of the most important
statements allowing us to connect the study of mappings
in~(\ref{eq2*!}) with the conditions~(\ref{eq2!!A})--(\ref{eq3!A})
is the following proposition. The principal points related to its
proof were indicated during the establishment of Lemma~1
in~\cite{SalSev$_2$}; however, for the sake of completeness of
presentation, we will establish it in full in the text.

\medskip
\begin{proposition}\label{pr2}{\sl\, Let $D$ be a domain in~${\Bbb R}^n$, $n\geqslant
2,$ let $x_0\in \overline{D}\setminus\{\infty\},$ let
$Q:D\rightarrow [0, \infty]$ be a Lebesgue measurable function and
let $f:D\rightarrow \overline{{\Bbb R}^n}$ be an open discrete
mapping satisfying relations ~(\ref{eq2!!A})--(\ref{eq3!A}) at a
point $x_0.$ If $0<r_1<r_2<\sup\limits_{x\in D}|x-x_0|,$ then
\begin{equation}\label{eq3B}
M_p(f(\Gamma(S(x_0, r_1), S(x_0, r_2), D)))\leqslant
\frac{\omega_{n-1}}{I^{p-1}}\,,
\end{equation}
where
\begin{equation}\label{eq9}
I=I(x_0,r_1,r_2)=\int\limits_{r_1}^{r_2}\
\frac{dr}{r^{\frac{n-1}{p-1}}q_{x_0}^{\frac{1}{p-1}}(r)}\,.
\end{equation}
If, in addition, $x_0\in D,$ $0<r_1<r_2<r_0={\rm dist}\,(x_0,
\partial D)$ and $E=\left(B(x_0, r_2), \overline{B(x_0, r_1)}\right),$ then
\begin{equation}\label{eq2A}
{\rm cap}_p\, f(E)\leqslant\ \frac{\omega_{n-1}}{I^{p-1}}\,,
\end{equation}
where $f(E)=\left(f\left(B(x_0, r_2)\right), f\left(\overline{B(x_0,
r_1)}\right)\right).$ }
\end{proposition}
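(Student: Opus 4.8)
The plan is to derive the modulus estimate~(\ref{eq3B}) directly from the definition of a ring $Q$-mapping, namely inequality~(\ref{eq2*!}), by choosing the admissible test function $\eta$ optimally, and then to deduce the capacity estimate~(\ref{eq2A}) from~(\ref{eq3B}) by way of Proposition~\ref{pr3}. First I would observe that the quantity $I=I(x_0,r_1,r_2)$ in~(\ref{eq9}) may be assumed finite and positive: if $I=\infty$ the right-hand side of~(\ref{eq3B}) is zero, but that case is handled by the standard trick of approximating $Q$ from below by $Q\wedge N$ and letting $N\to\infty$, or simply by noting that then $q_{x_0}(r)=0$ on a set of positive measure, which forces the right-hand side of~(\ref{eq2*!}) to be made arbitrarily small; and if $I=0$ there is nothing to prove. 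Assuming $0<I<\infty$, I would set
\begin{equation*}
\eta(r)=\frac{1}{I}\cdot\frac{1}{r^{\frac{n-1}{p-1}}q_{x_0}^{\frac{1}{p-1}}(r)}\,,\qquad r\in(r_1,r_2)\,,
\end{equation*}
which by~(\ref{eq9}) satisfies $\int_{r_1}^{r_2}\eta(r)\,dr=1$, hence~(\ref{eq8B}) holds with equality, so $\eta$ is admissible in~(\ref{eq2*!}).

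With this choice, the right-hand side of~(\ref{eq2*!}) becomes $\int_{A\cap D}Q(x)\eta^p(|x-x_0|)\,dm(x)$, and the key step is to evaluate this integral by passing to polar coordinates about $x_0$, writing $dm(x)=r^{n-1}\,dr\,d\mathcal H^{n-1}$ on the sphere $S(x_0,r)$ and using the definition~(\ref{eq10}) of $q_{x_0}$, which gives $\int_{S(x_0,r)\cap D}Q\,d\mathcal H^{n-1}\leqslant\omega_{n-1}r^{n-1}q_{x_0}(r)$. Substituting $\eta$ and simplifying the exponents — here $\eta^p(r)\cdot r^{n-1}\cdot q_{x_0}(r)=I^{-p}\cdot r^{n-1-\frac{p(n-1)}{p-1}}q_{x_0}^{1-\frac{p}{p-1}}(r)=I^{-p}\,r^{-\frac{n-1}{p-1}}q_{x_0}^{-\frac{1}{p-1}}(r)$, which is exactly $I^{-p}$ times the integrand in~(\ref{eq9}) — one finds
\begin{equation*}
\int\limits_{A\cap D}Q(x)\eta^p(|x-x_0|)\,dm(x)\leqslant\frac{\omega_{n-1}}{I^p}\int\limits_{r_1}^{r_2}\frac{dr}{r^{\frac{n-1}{p-1}}q_{x_0}^{\frac{1}{p-1}}(r)}=\frac{\omega_{n-1}}{I^p}\cdot I=\frac{\omega_{n-1}}{I^{p-1}}\,,
\end{equation*}
which together with~(\ref{eq2*!}) yields~(\ref{eq3B}).

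For the capacity statement, assume now $x_0\in D$ and $0<r_1<r_2<r_0=\mathrm{dist}(x_0,\partial D)$, so that $\overline{B(x_0,r_2)}\subset D$ and $E=(B(x_0,r_2),\overline{B(x_0,r_1)})$ is a genuine condenser in $D$. By Proposition~\ref{pr3}, $\mathrm{cap}_p\,f(E)=M_p(\Gamma_{f(E)})$, where $\Gamma_{f(E)}$ is the family of paths in $f(B(x_0,r_2))$ starting in $f(\overline{B(x_0,r_1)})$ and leaving every compact subset. Since $f$ is open and discrete (hence, on the relatively compact set $\overline{B(x_0,r_2)}$, sufficiently well-behaved that path families lift), I would argue that every path in $\Gamma_{f(E)}$ either meets the sphere $f(S(x_0,r_2))$ or tends to the boundary of $f(B(x_0,r_2))$, and that $\Gamma_{f(E)}$ is minorized by $f(\Gamma(S(x_0,r_1),S(x_0,r_2),D))$ — essentially because a path running from $f(\overline{B(x_0,r_1)})$ out of every compact subset of $f(B(x_0,r_2))$ must, pulled back appropriately, cross the ring $A(x_0,r_1,r_2)$. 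By the standard comparison principle for the $p$-modulus (minorization implies $M_p(\Gamma_{f(E)})\leqslant M_p(f(\Gamma(S_1,S_2,D)))$), we get $\mathrm{cap}_p\,f(E)\leqslant M_p(f(\Gamma(S_1,S_2,D)))\leqslant\omega_{n-1}/I^{p-1}$ by~(\ref{eq3B}).

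The main obstacle I anticipate is the last step: carefully justifying that $\Gamma_{f(E)}$ is minorized by $f(\Gamma(S(x_0,r_1),S(x_0,r_2),D))$, i.e.\ that every "condenser path" in the image contains a subpath that is the $f$-image of a path joining $S_1$ to $S_2$ in the ring. This requires using the openness and discreteness of $f$ on the relatively compact ball $\overline{B(x_0,r_2)}$ — in particular that $f$ is a closed map when restricted there, so that preimages of compacta are compacta and a path escaping every compact subset of $f(B(x_0,r_2))$ lifts to one approaching $S(x_0,r_2)$. This is a routine but slightly technical point; all the analytic content of the proposition is in the elementary but clean polar-coordinate computation of the first part.
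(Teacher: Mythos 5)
Your proposal follows essentially the same route as the paper's own proof: the identical admissible function $\eta(t)=\frac{1}{I}\,t^{-\frac{n-1}{p-1}}q_{x_0}^{-\frac{1}{p-1}}(t)$ with the Fubini/polar-coordinate computation giving~(\ref{eq3B}), and Proposition~\ref{pr3} together with maximal $f$-liftings of $\Gamma_{f(E)}$ for~(\ref{eq2A}). The one place where the paper is more careful than your sketch is precisely the step you flagged: a lifted path escaping every compact subset of $B(x_0,r_2)$ need only \emph{approach} $S(x_0,r_2)$ rather than meet it, so the paper minorizes by $\Gamma(S(x_0,r_1),S(x_0,r_2-\delta),D)$, applies the first part with $r_2-\delta$, and then lets $\delta\rightarrow 0$ using the absolute continuity of the integral defining $I$ (legitimate because $I<\infty$ after the initial reduction).
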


\begin{proof} We may consider that $I \ne 0,$ since (\ref{eq3B}) and (\ref{eq2A}) are obvious,
in this case. We also may consider that $I\ne \infty.$ Otherwise, we
may consider $Q(x)+\delta$ instead of $Q(x)$ in (\ref{eq3B}) and
(\ref{eq2A}), and then pass to the limit as $\delta\rightarrow 0.$
Let $I\ne\infty.$

\medskip
Let us first prove relation~(\ref{eq3B}) for the case $x_0\in
\overline{D}\setminus\{\infty\}.$ Now $q_{x_0}(r)\ne 0$ for a.e.
$r\in(r_1,r_2).$ Set
$$ \psi(t)= \left \{\begin{array}{rr}
1/[t^{\frac{n-1}{p-1}}q_{x_0}^{\frac{1}{p-1}}(t)], & t\in (r_1,r_2)\
,
\\ 0,  &  t\notin (r_1,r_2)\ .
\end{array} \right. $$
In this case, by Fubini's theorem,
\begin{equation}\label{eq3}
\int\limits_{A} Q(x)\cdot\psi^p(|x-x_0|)\,dm(x)=\omega_{n-1} I\,,
\end{equation}
where $A=A(r_1,r_2, x_0)$ is defined in~(\ref{eq6}). Observe that a
function $\eta_1(t)=\psi(t)/I,$ $t\in (r_1,r_2),$
satisfies~(\ref{eq8B}) because
$\int\limits_{r_1}^{r_2}\eta_1(t)\,dt=1.$ Now, by the definition of
$f$ in~(\ref{eq2*!})
\begin{equation}\label{eq5G}
M_p(f(\Gamma(S(x_0, r_1), S(x_0, r_2), D)))\leqslant\int\limits_A
Q(x)\cdot {\eta_{1}}^p (|x-x_0|)\,dm(x)=
\frac{\omega_{n-1}}{I^{p-1}}\,.
\end{equation}
The first part of Proposition~\ref{pr2} is established. Let us prove
the second part, namely, relation~(\ref{eq2A}). Let $\Gamma_E$ and
$\Gamma_{f(E)}$ be families of paths in the sense of the notation of
Proposition~\ref{pr3}. By this proposition
\begin{equation}\label{eq6*!}
{\rm cap}_p\,f(E)={\rm cap}_p (f(B(x_0, r_2)), f(\overline{B(x_0
,r_1)}))=M_p(\Gamma_{f(E)})\,.
\end{equation}
Let $\Gamma^{*}$ be a family of all maximal $f$-liftings of
$\Gamma_{f(E)}$ starting in $\overline{B(x_0, r_1)}.$ Arguing
similarly to the proof of Lemma~3.1 in~\cite{Sev$_1$}, one can show
that $\Gamma^{*}\subset \Gamma_E.$ Observe that
$\Gamma_{f(E)}>f(\Gamma^{*}),$ and $\Gamma_E> \Gamma(S(x_0,
r_2-\delta), S(x_0, r_1), D)$ for sufficiently small $\delta>0.$
By~(\ref{eq5G}), we obtain that
$$
M_p(\Gamma_{f(E)})\leqslant M_p(f(\Gamma^{*}))\leqslant
M_p(f(\Gamma_E))\leqslant
$$
\begin{equation}\label{eq6B}
\leqslant M_p(f(\Gamma(S(x_0, r_1), S(x_0, r_2-\delta), A( r_1,
r_2-\delta, x_0))))\leqslant
\frac{\omega_{n-1}}{\left(\int\limits_{r_1}^{r_2-\delta}\frac{dt}{
t^{\frac{n-1}{p-1}}q_{x_0}^{\frac{1}{p-1}}(t)}\right)^{p-1}}\,.
\end{equation}
Observe that a function $\widetilde{\psi(t)}:=\psi|_{(r_1, r_2)} =
\frac{1}{t^{\frac{n-1}{p-1}}q_{x_0}^{\frac{1}{p-1}}(t)}$ is
integrable on $(r_1,r_2),$ because $I\ne \infty.$ Hence, by the
absolute continuity of the integral, we obtain that
\begin{equation}\label{eqA29}\int\limits_{r_1}^{r_2-\delta}\frac{dt}{t^{\frac{n-1}{p-1}}q_{x_0}
^{\frac{1}{p-1}}(t)}\rightarrow
\int\limits_{r_1}^{r_2}\frac{dt}{t^{\frac{n-1}{p-1}}q_{x_0}^{\frac{1}{p-1}}(t)}\end{equation}
as $\delta\rightarrow 0.$ By~(\ref{eq6B}) and (\ref{eqA29}), we
obtain that
\begin{equation}\label{eq7}
M_p(\Gamma_{f(E)})\leqslant
\frac{\omega_{n-1}}{\left(\int\limits_{r_1}^{r_2}\frac{dt}
{t^{\frac{n-1}{p-1}}q_{x_0}^{\frac{1}{p-1}}(t)}\right)^{p-1}}\,.
\end{equation}
Combining~(\ref{eq6*!}) and~(\ref{eq7}), we obtain~(\ref{eq2A}).
\end{proof}$\Box$

\medskip
The next lemma contains an application of the previous
Lemma~\ref{lem1} to mapping theory.

\medskip
\begin{lemma}\label{lem3}
{\sl\, Let $D$ be a domain in ${\Bbb R}^n,$ let $1\leqslant
p\leqslant n,$ let $\Phi:[0, \infty]\rightarrow [0, \infty] $ be a
strictly increasing convex function such that the relation and let
$x_0\in D.$ Denote by $\frak{R}_{\Phi, p}(D)$ the family of all
discrete open mappings for which there exists a Lebesgue measurable
function $Q=Q_f(x):{\Bbb R}^n\rightarrow [0, \infty],$ $Q(x)\equiv
0$ for $x\in {\Bbb R}^n\setminus D,$
satisfying~(\ref{eq2*!})--(\ref{eq8B}) for any $x_0\in D,$ and, in
addition, (\ref{eq5A}) holds for some $0<M_0<\infty.$ Let
$0<r_1<r_2<d_0={\rm dist}\,(x_0,
\partial D),$ and let $E=(B(x_0, r_2), \overline{B(x_0, r_2)})$ be a
condenser. If the relation~(\ref{eq2}) holds for some
$\delta_0>\tau_0:=\Phi(0),$ then
$${\rm cap}_p\,f(E)\rightarrow 0$$
as $r_1\rightarrow 0$ uniformly over $f\in \frak{R}_{\Phi, p}(D).$ }
\end{lemma}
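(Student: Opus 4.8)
The plan is to combine the capacity estimate~(\ref{eq2A}) of Proposition~\ref{pr2} with the uniform lower bound on the integral $I$ furnished by Lemma~\ref{lem1}. Since each $f\in\frak{R}_{\Phi, p}(D)$ satisfies the modulus inequality~(\ref{eq2*!})--(\ref{eq8B}) at the fixed point $x_0\in D$, and $0<r_1<r_2<d_0={\rm dist}\,(x_0,\partial D)$, Proposition~\ref{pr2} applies with $E=(B(x_0,r_2),\overline{B(x_0,r_1)})$ and gives
$${\rm cap}_p\, f(E)\leqslant\frac{\omega_{n-1}}{I^{p-1}}\,,\qquad I=I(x_0,r_1,r_2)=\int\limits_{r_1}^{r_2}\frac{dr}{r^{\frac{n-1}{p-1}}q_{x_0}^{\frac{1}{p-1}}(r)}\,,$$
where $q_{x_0}$ is the spherical mean of $Q=Q_f$ from~(\ref{eq10}). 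As $p>1$ and $\omega_{n-1}$ is a fixed constant, it suffices to prove that $I\rightarrow\infty$ as $r_1\rightarrow 0$, \emph{uniformly} over all $f\in\frak{R}_{\Phi, p}(D)$.

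To obtain this uniform divergence I would appeal to Lemma~\ref{lem1}, whose hypotheses are all met here: $1\leqslant p\leqslant n$, the function $\Phi$ is strictly increasing and convex, relation~(\ref{eq2}) holds, and the family $\frak{Q}=\{Q_f:f\in\frak{R}_{\Phi, p}(D)\}$ satisfies the integral bound~(\ref{eq5A}) with the common constant $M_0$. Lemma~\ref{lem1} requires an outer radius in $(0,1)$, so I first fix $\rho:=\min\{r_2,1/2\}$; since the integrand is nonnegative, $I(x_0,r_1,r_2)\geqslant\int_{r_1}^{\rho}\frac{dr}{r^{\frac{n-1}{p-1}}q_{x_0}^{\frac{1}{p-1}}(r)}$. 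Given any $\sigma>0$, Lemma~\ref{lem1} produces a radius $r_*=r_*(\sigma,\rho,\Phi)\in(0,\rho)$, depending on $\sigma$, $\rho$ and $\Phi$ but \emph{not} on the individual $Q$, such that this last integral is at least $\sigma$ for every $\varepsilon=r_1\in(0,r_*)$ and every $Q\in\frak{Q}$.

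Combining these facts, for every $\sigma>0$ there is $r_*>0$ with $I\geqslant\sigma$ whenever $0<r_1<r_*$, uniformly in $f$; hence $I\rightarrow\infty$ and ${\rm cap}_p\, f(E)\leqslant\omega_{n-1}/I^{p-1}\rightarrow 0$ uniformly over $f\in\frak{R}_{\Phi, p}(D)$, as asserted. The only point that genuinely requires care is the uniformity: the whole argument hinges on the fact that the threshold $r_*$ delivered by Lemma~\ref{lem1} is independent of the particular function $Q=Q_f$ (it depends only on $M_0$, $\rho$, $\Phi$ and $\sigma$), so that the divergence of $I$ is uniform across the entire family. The reduction to an outer radius below $1$ is a minor technical adjustment needed solely to match the normalization required in the statement of Lemma~\ref{lem1}.
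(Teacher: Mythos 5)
Your proposal is correct and follows essentially the same route as the paper's own proof: apply Proposition~\ref{pr2} to get ${\rm cap}_p\,f(E)\leqslant \omega_{n-1}/I^{p-1}$ and then invoke Lemma~\ref{lem1} for the uniform divergence of $I$ as $r_1\rightarrow 0$. The paper compresses this into two lines, while you additionally spell out the minor normalization $\rho=\min\{r_2,1/2\}$ needed because Lemma~\ref{lem1} asks for an outer radius in $(0,1)$, and you make explicit that $r_*$ depends only on $M_0$, $\rho$, $\Phi$, $\sigma$ (not on the individual $Q_f$) --- details the paper leaves implicit.
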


\medskip
\begin{proof}
By Proposition~\ref{pr2}
\begin{equation}\label{eq6A}
{\rm cap}_p\,f(E)\leqslant \frac{\omega_{n-1}}{I^{p-1}}\,,
\end{equation}
where $\omega_{n-1}$ denotes an area of the unit sphere ${\Bbb
S}^{n-1}:=S(0, 1)$ in ${\Bbb R}^n,$
$I:=\int\limits_{r_1}^{r_2}\frac{dr}{r^{\frac{n-1}{p-1}}q^{\frac{1}{p-1}}_{x_0}(r)}$
and $q_{x_0}$ is defined in~(\ref{eq10}). The rest of the statement
follows from Lemma~\ref{lem1}.~$\Box$
\end{proof}

\section{Proof of Theorems~\ref{th1} and~\ref{th2}}

The following statement was proved for $p=n$ in
\cite[Lemma~3.11]{MRV$_2$} (see also \cite[Lemma~2.6, Ch.~III]{Ri}).

\medskip
\begin{proposition}\label{pr1}
{\sl\, Let $F$ be a compact proper subset of $\overline{{\Bbb R}^n}$
with ${\rm cap}\,F>0.$ Then for every $a>0$ there exists $\delta>0$
such that
$$
{\rm cap}\,(\overline{{\Bbb R}^n}\setminus F,\, C)\geqslant \delta
$$
for every continuum $C\subset \overline{{\Bbb R}^n}\setminus F$ with
$h(C)\geqslant a.$ }
\end{proposition}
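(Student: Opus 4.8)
The plan is to pass from the capacity to a modulus of a path family and then to bound that modulus from below uniformly. By the definition of $p$-capacity and Proposition~\ref{pr3} applied to the condenser $E=(\overline{{\Bbb R}^n}\setminus F,\,C)$, we have ${\rm cap}\,(\overline{{\Bbb R}^n}\setminus F,\,C)=M(\Gamma_E)$, where $\Gamma_E$ is the family of all paths issuing from $C$ in the open set $A=\overline{{\Bbb R}^n}\setminus F$ which eventually leave every compact subset of $A$. Since $F$ is compact, $\partial A\subset F$, so any path joining the continuum $C$ to $F$ possesses a subcurve lying in $\Gamma_E$; hence $\Gamma(C,F,\overline{{\Bbb R}^n})>\Gamma_E$ and, by monotonicity and minorization of the modulus, ${\rm cap}\,(\overline{{\Bbb R}^n}\setminus F,\,C)\geqslant M(\Gamma(C,F,\overline{{\Bbb R}^n}))$. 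It therefore suffices to produce a positive lower bound, depending only on $a$, $F$ and $n$, for $M(\Gamma(C,F,\overline{{\Bbb R}^n}))$ over all continua $C\subset\overline{{\Bbb R}^n}\setminus F$ with $h(C)\geqslant a$.

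The main argument I would run is by contradiction together with a compactness (normal-family) selection. If the assertion failed for some $a>0$, there would exist continua $C_k\subset\overline{{\Bbb R}^n}\setminus F$ with $h(C_k)\geqslant a$ and $M(\Gamma(C_k,F,\overline{{\Bbb R}^n}))\rightarrow 0$. The space $\overline{{\Bbb R}^n}$ is compact in the chordal metric, so by the Blaschke selection theorem in the hyperspace of its continua, after passing to a subsequence we may assume that $C_k\rightarrow C_0$ in the Hausdorff distance $d_H$. Continuity of the chordal diameter under Hausdorff convergence gives $h(C_0)=\lim_k h(C_k)\geqslant a$, so $C_0$ is a nondegenerate continuum and in particular ${\rm cap}\,C_0>0$. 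Since moreover ${\rm cap}\,F>0$, the limiting configuration has $M(\Gamma(C_0,F,\overline{{\Bbb R}^n}))>0$: if $C_0\cap F\neq\varnothing$ the joining curves may be taken arbitrarily short near a contact point and the modulus is infinite, while if $C_0\cap F=\varnothing$ the positivity is the standard fact that the modulus of the family joining two disjoint compact sets of positive capacity in the compact space $\overline{{\Bbb R}^n}$ is positive.

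The step I expect to be the main obstacle is the transfer of this positivity from the fixed limit $C_0$ back to the sequence, i.e. the lower semicontinuity $\liminf_k M(\Gamma(C_k,F,\overline{{\Bbb R}^n}))\geqslant M(\Gamma(C_0,F,\overline{{\Bbb R}^n}))$ under Hausdorff convergence of the plate $C$. The idea is that each path joining $C_0$ to $F$ can be prolonged by an arc of chordal size at most $d_H(C_0,C_k)\rightarrow 0$ so as to meet $C_k$, which relates admissible metrics for the two families; making this quantitative requires a careful collar estimate controlling the contribution of the short connecting arcs, and this is where the genuine work lies. Once the semicontinuity is established, it forces $0=\lim_k M(\Gamma(C_k,F,\overline{{\Bbb R}^n}))\geqslant M(\Gamma(C_0,F,\overline{{\Bbb R}^n}))>0$, the desired contradiction.

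As an alternative that both yields explicit constants and automatically handles the possible total disconnectedness of $F$ (for which the classical two-continua Loewner estimate is unavailable), one may instead invoke spherical symmetrization about a chordal axis: symmetrizing $C$ and $F$ replaces them by a radial segment and a complementary spherical cap without increasing the condenser capacity, reducing the bound to the explicitly computable capacity of a symmetric spherical ring, which stays bounded away from $0$ as soon as the symmetrized size of $C$ (controlled by $a$) and of $F$ (controlled by ${\rm cap}\,F>0$) are bounded below. Either route completes the proof.
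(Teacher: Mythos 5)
The paper itself offers no proof of Proposition~\ref{pr1}: it is quoted from \cite[Lemma~3.11]{MRV$_2$} and \cite[Ch.~III, Lemma~2.6]{Ri}. Your argument therefore has to stand on its own, and it does not. Its skeleton is fine: the reduction ${\rm cap}\,(\overline{{\Bbb R}^n}\setminus F,\,C)\geqslant M(\Gamma(C,F,\overline{{\Bbb R}^n}))$ via Proposition~\ref{pr3} and minorization (modulo the routine normalization sending a point of $F$ to $\infty$, since Proposition~\ref{pr3} concerns condensers in ${\Bbb R}^n$), the Blaschke selection of a limit continuum $C_0$ with $h(C_0)\geqslant a$, and the positivity of $M(\Gamma(C_0,F,\overline{{\Bbb R}^n}))$ are all sound. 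But the step you explicitly defer --- $\liminf_k M(\Gamma(C_k,F,\overline{{\Bbb R}^n}))\geqslant M(\Gamma(C_0,F,\overline{{\Bbb R}^n}))$ --- is not a technicality awaiting a ``collar estimate''; it is essentially the whole theorem, and the sketch you give of it breaks down. When $M(\Gamma(C_k,F,\overline{{\Bbb R}^n}))$ is small, near-extremal admissible metrics $\rho_k$ concentrate precisely in a thin shell around the small plate $C_k$, so the short prolonging arcs may carry $\rho_k$-length comparable to $1$, and prolongation yields no admissibility for $\Gamma(C_0,F,\overline{{\Bbb R}^n})$. Worse, the semicontinuity you need is \emph{false} for general compact plates: let $C_k$ be a finite $(1/k)$-net of a fixed nondegenerate segment $J\subset\overline{{\Bbb R}^n}\setminus F$; then $C_k\rightarrow J$ in the Hausdorff metric, $M(\Gamma(C_k,F,\overline{{\Bbb R}^n}))=0$ for every $k$ (each curve of that family passes through one of finitely many points, and such families have zero $n$-modulus), while $M(\Gamma(J,F,\overline{{\Bbb R}^n}))>0$. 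Under Hausdorff convergence one only has the opposite, upper, semicontinuity (outer regularity of capacity). Hence any correct completion must use the connectedness of the $C_k$ inside the semicontinuity step itself --- for instance, replacing $C_k$ for large $k$ by a subcontinuum joining the two boundary spheres of a fixed closed chordal annulus centered at a point of $C_0$, and then proving a uniform lower bound for such anchored continua --- and nothing in your sketch does this; that residual uniform bound is again a statement of the same nature as the proposition being proved.

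Your symmetrization fallback fails as well, and precisely in the case you advertise it for. Spherical symmetrization can only \emph{decrease} the capacity of the condenser, so a useful lower bound requires the symmetrized condenser to retain capacity bounded away from zero; but the spherical symmetrization of a general compact set with ${\rm cap}\,F>0$ can be a single point. Take for $F$ a Cantor set of positive Hausdorff dimension (hence of positive conformal capacity) lying on a sphere $S(x_0,1)$: every sphere $S(x_0,t)$ meets $F$ in a set of zero $\mathcal{H}^{n-1}$-measure and only the radius $t=1$ occurs, so the symmetrized set is one point, of capacity zero, and the symmetrization inequality is vacuous. Symmetrization gives the Loewner--Teichm\"uller estimate when $F$ is a \emph{continuum}; the totally disconnected case --- the only reason the hypothesis reads ``${\rm cap}\,F>0$'' rather than ``$F$ is a continuum'' --- is exactly the one it cannot treat, contrary to your claim. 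In summary, both of your routes have essential gaps at the point where positivity of capacity, rather than connectedness, of $F$ must be exploited; the cited classical proofs in \cite{MRV$_2$} and \cite{Ri} are where that combination is actually carried out.
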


{\it Proof of Theorem~\ref{th1}} largely uses the classical scheme
used in the quasiregular case, as well as applied by the author
earlier, see, for example, \cite[Theorem~4.1]{MRV$_2$},
\cite[Theorem~2.9.III]{Ri}, \cite[Theorem~8]{Cr},
\cite[Lemma~3.1]{Sev$_1$} and \cite[Lemma~4.2]{SalSev$_1$}.

\medskip
Let $x_0\in D,$ $\varepsilon_0<d(x_0, \partial D),$ and let $E=(A,
C)$ be a condenser, where $A=B(x_0, \varepsilon_0)$ and
$C=\overline{B(x_0, \varepsilon)}.$ As usual,
$\varepsilon_0:=\infty$ for $D={\Bbb R}^n.$ Let $a>0.$ Since ${\rm
cap}\,E>0,$ by Proposition~\ref{pr1} there exists
$\delta=\delta(a)>0$ such that
\begin{equation}\label{eq7A}{\rm cap}\,(\overline{{\Bbb R}^n}\setminus F,\,
E)\geqslant \delta
\end{equation}
for any continuum $C\subset \overline{{\Bbb R}^n}\setminus E$ such
that $h(C)\geqslant a.$ On the other hand, by Lemma~\ref{lem3} there
exists such that
$${\rm cap}\,f(E)\leqslant \alpha(\varepsilon)\,,\quad
\varepsilon\,\in (0,\,\varepsilon_0)\,,$$
for any $f\in{\frak F}^{\Phi}_{M_0, E}(D),$ where $\alpha$ is some
function such that $\alpha(\varepsilon)\rightarrow 0$ as
$\varepsilon\,\rightarrow 0.$ Now, for a number $\delta=\delta(a)$
there exists $\varepsilon_*=\varepsilon_*(a)$ such that
\begin{equation}\label{eq28*!}
{\rm cap\,} f(E)\leqslant \delta\,, \quad \varepsilon \in (0,
{\varepsilon_*}(a))\,.
\end{equation}
By~(\ref{eq28*!}), we obtain that
$${\rm cap\,}\left(\overline{{\Bbb R}^n}\setminus
E,\,f(\overline{B(x_0,\,\varepsilon)})\right)\leqslant {\rm
cap\,}\left(f({B(x_0,\varepsilon_0)}),
f(\overline{B(x_0,\,\varepsilon)})\right)\leqslant\delta$$
for $\varepsilon (0, \varepsilon_*(a)).$ Now, by~(\ref{eq7A}),
$h(f(\overline{B(x_0,\,\varepsilon)}))<a.$ Finally, for any $a>0$
there is $\varepsilon_*=\varepsilon_*(a)$ such that
$h(f(\overline{B(x_0,\,\varepsilon)}))<a$ for $\varepsilon\in (0,
\varepsilon_*(a)).$ Theorem is proved.~$\Box$

\medskip
To prove Theorem~\ref{th2}, we need the following most important
statement (see~\cite[(8.9)]{Ma}).

\medskip
\begin{proposition}\label{pr1A}
{\sl Given a condenser $E=(A, C)$ and  $1<p<n,$
$$
{\rm cap}_p\,E\geqslant n{\Omega}^{\frac{p}{n}}_n
\left(\frac{n-p}{p-1}\right)^{p-1}\left[m(C)\right]^{\frac{n-p}{n}}\,,
$$
where ${\Omega}_n$ denotes the volume of the unit ball in ${\Bbb
R}^n$, and $m(C)$ is the $n$-dimensional Lebesgue measure of $C.$}
\end{proposition}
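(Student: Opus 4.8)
The plan is to recognize the asserted inequality as the sharp isoperimetric lower bound for the $p$-capacity and to prove it by the coarea--isoperimetric scheme, in which the extremal configuration is a ball whose outer shell is sent to infinity. First I would normalize the competitors. Given any admissible $u\in W_0(E)$, the truncation $\min\{u,1\}$ is again admissible and satisfies $|\nabla\min\{u,1\}|\leqslant|\nabla u|$, so it cannot increase $\int_A|\nabla u|^p\,dm$; hence it suffices to bound this energy from below for functions with $0\leqslant u\leqslant 1$ and $u\equiv 1$ on $C$. For such $u$ put $\mu(t)=m(\{x:u(x)>t\})$. Then $\mu$ is nonincreasing on $(0,1)$, vanishes for $t\geqslant 1$, and, since $\{u>t\}\supset\{u=1\}\supset C$ for $t\in(0,1)$, it satisfies $\mu(t)\geqslant m(C)$ throughout $(0,1)$.

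The core estimate lives on the level sets of $u$. By the coarea formula,
\[
\int_A|\nabla u|^p\,dm=\int_0^1\left(\int_{u^{-1}(t)}|\nabla u|^{p-1}\,d\mathcal{H}^{n-1}\right)dt,
\]
while the same formula applied to the distribution function gives $-\mu^{\,\prime}(t)=\int_{u^{-1}(t)}|\nabla u|^{-1}\,d\mathcal{H}^{n-1}$ for a.e.\ $t$. Applying Hölder's inequality on $u^{-1}(t)$ to the factorization $1=|\nabla u|^{\frac{p-1}{p}}\cdot|\nabla u|^{-\frac{p-1}{p}}$ yields
\[
\left[\mathcal{H}^{n-1}(u^{-1}(t))\right]^{p}\leqslant\left(\int_{u^{-1}(t)}|\nabla u|^{p-1}\,d\mathcal{H}^{n-1}\right)\left(-\mu^{\,\prime}(t)\right)^{p-1}.
\]
Invoking the Euclidean isoperimetric inequality $\mathcal{H}^{n-1}(u^{-1}(t))\geqslant n\Omega_n^{1/n}\,\mu(t)^{(n-1)/n}$ (valid for a.e.\ $t$, since for almost every level the topological level set carries the reduced boundary of $\{u>t\}$) and integrating in $t$ gives
\[
\int_A|\nabla u|^p\,dm\geqslant\left(n\Omega_n^{1/n}\right)^{p}\int_0^1\frac{\mu(t)^{\frac{p(n-1)}{n}}}{\left(-\mu^{\,\prime}(t)\right)^{p-1}}\,dt.
\]

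It then remains to minimize the one-dimensional functional on the right over nonincreasing $\mu$ with $\mu\geqslant m(C)$. Writing $\int_0^1dt$ in the variable $s=\mu(t)$ as $\int ds/(-\mu^{\,\prime})$ and applying Hölder's inequality with exponents $p$ and $\frac{p}{p-1}$ gives
\[
1\leqslant\left(\int_0^1\frac{\mu^{\frac{p(n-1)}{n}}}{(-\mu^{\,\prime})^{p-1}}\,dt\right)^{1/p}\left(\int_{m(C)}^{\infty}s^{-\frac{(n-1)p}{n(p-1)}}\,ds\right)^{(p-1)/p}.
\]
The exponent $\frac{(n-1)p}{n(p-1)}>1$ precisely when $p<n$, so the $s$-integral converges and equals $\frac{n(p-1)}{n-p}\,m(C)^{-\frac{n-p}{n(p-1)}}$; rearranging yields
\[
\int_0^1\frac{\mu^{\frac{p(n-1)}{n}}}{(-\mu^{\,\prime})^{p-1}}\,dt\geqslant\left(\frac{n-p}{n(p-1)}\right)^{p-1}m(C)^{\frac{n-p}{n}}.
\]
Combining this with the previous display and simplifying the constant $\left(n\Omega_n^{1/n}\right)^p\left(\frac{n-p}{n(p-1)}\right)^{p-1}=n\Omega_n^{p/n}\left(\frac{n-p}{p-1}\right)^{p-1}$ produces exactly the asserted bound.

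The genuinely delicate point is the measure-theoretic justification of the coarea identities, in particular that $\mu$ is absolutely continuous with $-\mu^{\,\prime}(t)=\int_{u^{-1}(t)}|\nabla u|^{-1}\,d\mathcal{H}^{n-1}$ and that the topological level set may be replaced by the reduced boundary of $\{u>t\}$ for almost every $t$, with the critical set $\{\nabla u=0\}$ controlled. Since elements of $W_0(E)$ are only ACL, the cleanest route is to prove the inequality first for smooth $u$, where coarea and Sard's theorem are classical, and then to pass to the infimum defining ${\rm cap}_p$ by mollification; alternatively, one may invoke the Pólya--Szegő inequality for the symmetric decreasing rearrangement $u^{*}$, which does not increase $\int_A|\nabla u|^p\,dm$ and reduces the whole problem to the explicit radial computation for concentric balls. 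I expect this isoperimetric/coarea bookkeeping, rather than the one-dimensional optimization, to be the main hurdle.
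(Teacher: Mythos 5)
Your argument is essentially correct, but there is nothing internal to compare it against: the paper does not prove this proposition at all, it simply quotes it from Maz'ya \cite[(8.9)]{Ma}. What you have written is a reconstruction of the standard proof of that cited isocapacitary inequality, and the reconstruction is sound: the chain truncation, coarea, level-set H\"{o}lder, isoperimetric inequality, one-dimensional minimization is exactly the right mechanism; the $s$-integral converges precisely because $\frac{(n-1)p}{n(p-1)}>1$ when $p<n$; and the constant comes out exactly, since $\left(n\Omega_n^{1/n}\right)^p\left(\frac{n-p}{n(p-1)}\right)^{p-1}=n\Omega_n^{p/n}\left(\frac{n-p}{p-1}\right)^{p-1}$. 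Two refinements are worth making explicit when writing it up. First, the identity $-\mu^{\,\prime}(t)=\int_{u^{-1}(t)}|\nabla u|^{-1}\,d\mathcal{H}^{n-1}$ is in general only an inequality, $-\mu^{\,\prime}(t)\geqslant\int_{u^{-1}(t)\cap\{\nabla u\ne 0\}}|\nabla u|^{-1}\,d\mathcal{H}^{n-1}$ for a.e.\ $t$, because the distribution function may carry a singular part (plateaus, the critical set); fortunately every step of your proof uses this quantity only in that favourable direction, and likewise the substitution $\int_0^1(-\mu^{\,\prime})\,\mu^{-\alpha}\,dt\leqslant\int_{m(C)}^{\infty}s^{-\alpha}\,ds$ holds for any monotone $\mu$ with the inequality pointing the way you need, so neither imprecision damages the argument. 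Second, your reduction to smooth competitors is the correct way to dispose of the ACL technicalities: replace $u$ by $\min\{(1+\varepsilon)u,1\}$, which is identically $1$ on an open neighbourhood of $C$, mollify, and let $\varepsilon\rightarrow 0$; alternatively the rearrangement route you mention reduces everything to the radial computation for concentric balls, whose limit as the outer radius tends to infinity reproduces exactly this constant (which also explains why the bound is sharp). So the only genuine gap is the regularization detail you yourself flagged, and the remedies you propose for it are the standard, correct ones.
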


\medskip
The basic lower estimate of capacity of a condenser $E=(A,
C)$ in ${\Bbb R}^n$ is given by
\begin{equation}\label{2.5} {\rm cap}_p\ E = {\rm cap}_p\ (A, C)
\geqslant \left(b_n\frac{(d(C))^p}
{(m(A))^{1-n+p}}\right)^{\frac{1}{n-1}}\,,\quad p>n-1,
\end{equation}
where $b_n$ depends only on $n$ and $p$ and $d(C)$ denotes the
diameter of $C$ (see \cite[Proposition~6]{Kr},
cf.~\cite[Lemma~5.9]{MRV$_1$}).

\medskip
{\it Proof of Theorem~\ref{th2}} is based on the approach used in
the proof of~Lemma~2.4 in~\cite{GSS}. Let $0<r_0<{\rm
dist\,}(x_0,\,\partial D).$ Consider a condenser $E=(A, C)$ with
$A=B(x_0, r_0),$ $C=\overline{B(x_0, \varepsilon)}.$ By
Lemma~\ref{lem3}, there is a function $\alpha=\alpha(\varepsilon)$
and $0<\varepsilon^{\,\prime}_0<r_0$ such that
$\alpha(\varepsilon)\rightarrow 0$ as $\varepsilon\rightarrow 0$
and, in addition,
$${\rm cap}_p\,f(E)\leqslant \alpha(\varepsilon)$$
for any $\varepsilon\in (0, \varepsilon^{\,\prime}_0)$ and
$f\in{\frak F}^{\Phi}_{M_0, p}(D).$
Applying Proposition~\ref{pr1A}, one obtains
$$
\alpha(\varepsilon)\geqslant{\rm cap}_p\,f(E)\geqslant
n{\Omega}^{\frac{p}{n}}_n
\left(\frac{n-p}{p-1}\right)^{p-1}\left[m(f(C))\right]^{\frac{n-p}{n}}\,,
$$
where ${\Omega}_n$ denotes the volume of the unit ball in ${\Bbb
R}^n,$ and $m(C)$ stands for the $n$-dimensional Lebesgue measure of
$C.$ In other words,
\begin{equation*}
m(f(C))\leqslant \alpha_1(\varepsilon)\,,
\end{equation*}
where $\alpha_1(\varepsilon)\rightarrow 0$ as
$\varepsilon\rightarrow 0.$ The last relation implies the existence
of a number $\varepsilon_1\in (0, 1),$ such that
\begin{equation}\label{eqroughb}
m(f(C))\leqslant 1\,,
\end{equation}
where $C=\overline{B(x_0, \varepsilon_1)}.$

\medskip
Further reasoning is related to the repeated application of
Lemma~\ref{lem3}. Consider one more condenser in this respect. Let
$E_1=(A_1, C_{\varepsilon}),$ $A_1=B(x_0, \varepsilon_1),$ and
$C_{\varepsilon}=\overline{B(x_0, \varepsilon)},$ $\varepsilon\in
(0, \varepsilon_1).$ By Lemma~\ref{lem3} there is a function
$\alpha_2(\varepsilon)$ and a number
$0<\varepsilon^{\,\prime}_0<\varepsilon_1$ such that
$$
{\rm cap}_p\,f(E_1)\leqslant \alpha_2(\varepsilon)
$$
for any $\varepsilon\in (0, \varepsilon^{\,\prime}_0),$ where
$\alpha_2(\varepsilon)\rightarrow 0$ as $\varepsilon\rightarrow 0.$
On the other hand, according to~(\ref{2.5}),
\begin{equation}\label{eq1} \left(c_1\frac{\left(d(f(\overline{B(x_0,
\varepsilon)}))\right)^p} {\left(m(f(B(x_0,
\varepsilon_1)))\right)^{1-n+p}}\right)^{\frac{1}{n-1}} \leqslant
{\rm cap\,}_p\,f\left(E_1\right)\leqslant \alpha_2(\varepsilon)\,.
\end{equation}
By~(\ref{eqroughb}) and~(\ref{eq1}), one gets
\begin{equation}\label{eq1A}
d(f(\overline{B(x_0, \varepsilon)})) \leqslant
\alpha_3(\varepsilon)\,,
\end{equation}
where $\alpha_3(\varepsilon)\rightarrow0$ as $\varepsilon\rightarrow
0.$ The proof of Theorem~\ref{th2} is completed, since the mapping
$f\in{\frak F}^{\Phi}_{M_0, p}(D)$ participating in~(\ref{eq1A}) is
arbitrary.~$\Box$

\section{Proof of Theorems~\ref{th3}--\ref{th6}}

The proofs of these theorems are conceptually close to the proofs of
Theorems 1–4 in~\cite{SevSkv$_1$} and use the same approach. Let's
start with the following very useful remark (see, for example,
\cite[Remark~1]{SevSkv$_1$}).

\medskip
\begin{remark}\label{rem1}
Let us show that, for a given domain $D_i,$ the
relation~(\ref{eq17***}) implies the so-called strong accessibility
of its boundary with respect to $p$-modulus (see
also~\cite[Theorem~6.2]{Na$_1$}). Let $i\in I,$ let $x_0\in
\partial D_i$ and let $U$ be some neighborhood of $x_0.$ We may assume
that $x_0\ne \infty.$ Let $\varepsilon_1> 0$ be such that $V:=B(x_0,
\varepsilon_1)$ and $\overline{V}\subset U.$ If $\partial U \ne
\varnothing $ and $\partial V \ne \varnothing,$ put
$\varepsilon_2:={\rm dist}\,(\partial U, \partial V)> 0.$ Let $F$
and $G$ be continua in $D_i$ such that $F\cap \partial U \ne
\varnothing \ne F \cap
\partial V$ and $G\cap \partial U \ne \varnothing \ne G \cap
\partial V. $ From the last relations it follows that $h(F)\geqslant
\varepsilon_2$ and $h(G)\geqslant \varepsilon_2.$ By the
equi-uniformity of $D_i$ with respect to $p$-modulus, we may find
$\delta=\delta(\varepsilon_2)> 0$ such that $M_p(\Gamma(F, G,
D_i))\geqslant \delta> 0.$ In particular, {\it for any neighborhood
$U$ of $x_0,$ there is a neighborhood $V$ of the same point, a
compact set $F$ in $D_i$ and a number $\delta> 0$ such that
$M_p(\Gamma(F, G, D_i))\geqslant \delta> 0 $ for any continuum
$G\subset D_i$ such that $G\cap \partial U \ne \varnothing \ne G
\cap \partial V.$} This property is called {\it a strong
accessibility} of $\partial D_i$ at the point $x_0$ with respect to
$p$-modulus. Thus, this property is established for any domain $D_i$
which is an element of some equi-uniform family $\{D_i\}_{i\in I}.$
\end{remark}

\medskip
{\it Proof of Theorem~\ref{th3}}. The equicontinuity of the family
$\frak{F}_{\Phi, A, p, \delta}(D)$ inside the domain $D$ follows
from \cite[Theorem~4.1]{RS} for $p=n$ and Theorem~\ref{th2} for
$p\ne n$. Put $f\in \frak{F}_{\Phi, A, p, \delta}(D)$ and
$Q=Q_f(x).$ Set
$$Q^{\,\prime}(x)=\begin{cases}Q(x), & Q(x)\geqslant 1\\ 1, & Q(x)<1\end{cases}\,.$$
Observe that $Q^{\,\prime}(x)$ satisfies (\ref{eq2!!A}) up to a
constant. Indeed,
$$
\int\limits_D\Phi(Q^{\,\prime}(x))\frac{dm(x)}{\left(1+|x|^2\right)^n}=
\int\limits_{\{x\in D: Q(x)< 1
\}}\Phi(Q^{\,\prime}(x))\frac{dm(x)}{\left(1+|x|^2\right)^n}+$$$$+
\int\limits_{\{x\in D: Q(x)\geqslant 1\}
}\Phi(Q^{\prime}(x))\frac{dm(x)}{\left(1+|x|^2\right)^n}\leqslant
M_0+\Phi(1)\int\limits_{{\Bbb
R}^n}\frac{dm(x)}{\left(1+|x|^2\right)^n}=M^{\,\prime}_0<\infty\,.$$
No, by~\cite[Theorem~2]{Sev$_2$} and Remark~\ref{rem1}, a mapping
$f\in\frak{F}_{\Phi, A, p, \delta}(D)$ has a continuous extension to
$\overline{D}$ for $p=n.$ In addition, by Lemma~\ref{lem1},
$\int\limits_{0}^{r_0}\frac{dt}{t^{\frac{n-1}{p-1}}q^{\,\prime\frac{1}{p-1}}_{x_0}(t)}=\infty,$
where $q^{\,\prime}_{x_0}(t)~=~\frac{1}{\omega_{n-1}r^{n-1}}
\int\limits_{S(x_0, t)}Q^{\,\prime}(x)\,d\mathcal{H}^{n-1}.$ In this
case, a continuous extension of the mapping from $f$ to $\partial D
$ can be established similarly to Theorem~1 in~\cite{Sev$_2$}. Note
that a rigorous proof of this fact was given
in~\cite[Theorem~1.2]{IS$_1$} for the case when the domains $D$ and
$f(D)$ have compact closures, and its proof in an arbitrary case can
be presented completely by analogy.

It remains to show that the family $\frak{F}_{\Phi, A, p,
\delta}(\overline{D})$ is equicontinuous at $\partial D.$ Suppose
the opposite. Then there is $x_0\in \partial D$ for which
$\frak{F}_{\Phi, A, p, \delta}(\overline{D})$ is not equicontinuous
at $x_0.$

Due to the additional application of the inversion $\varphi(x)=
\frac{x}{|x|^2},$ we may assume that $x_0\ne \infty.$ Then there is
a number $a>0$ with the following property: for any $m=1,2, \ldots$
there is $x_m \in \overline{D}$ and ${f}_m\in \frak{F}_{\Phi, A, p,
\delta}(\overline{D})$ such that $|x_0-x_m|< 1/m$ and, in addition,
$h(f_m(x_m), f_m(x_0))\geqslant a.$ Since $f_m$ has a continuous
extension at $x_0,$ we may find a sequence $x^{\,\prime}_m\in D,$
$x^{\,\prime}_m\rightarrow x_0$ as $m\rightarrow\infty$ such that
$h(f_m(x^{\,\prime}_m), f_m(x_0))\leqslant 1/m.$ Thus,
\begin{equation}\label{eq6***}
h(f_m(x_m), f_m(x^{\,\prime}_m))\geqslant a/2\qquad \forall\,\,m\in
{\Bbb N}\,.
\end{equation}
Since $f_m$ has a continuous extension to $\partial D,$ we may
assume that $x_m\in D.$ Since the domain $D$ is locally connected,
at the point $x_0,$ there is a sequence of neighborhoods $V_m$ of
the point $x_0$ with $h(V_m)\rightarrow 0$ for $m\rightarrow\infty$
such that the sets $D \cap V_m$ are domains and $D\cap V_m \subset
B(x_0, 2^{\,-m}).$ Without loss of the generality of reasoning,
going to subsequences, if necessary, we may assume that $x_m,
x^{\,\prime}_m \in D\cap V_m.$ Join the points $x_m$ and
$x^{\,\prime}_m$ by the path $\gamma_m:[0,1]\rightarrow {\Bbb R}^n$
such that $\gamma_m(0)=x_m,$ $\gamma_m(1)=x^{\,\prime}_m$ and
$\gamma_m(t)\in V_m$ for $t\in (0,1),$ see Figure~\ref{fig6}.
\begin{figure}[h]
\centerline{\includegraphics[scale=0.6]{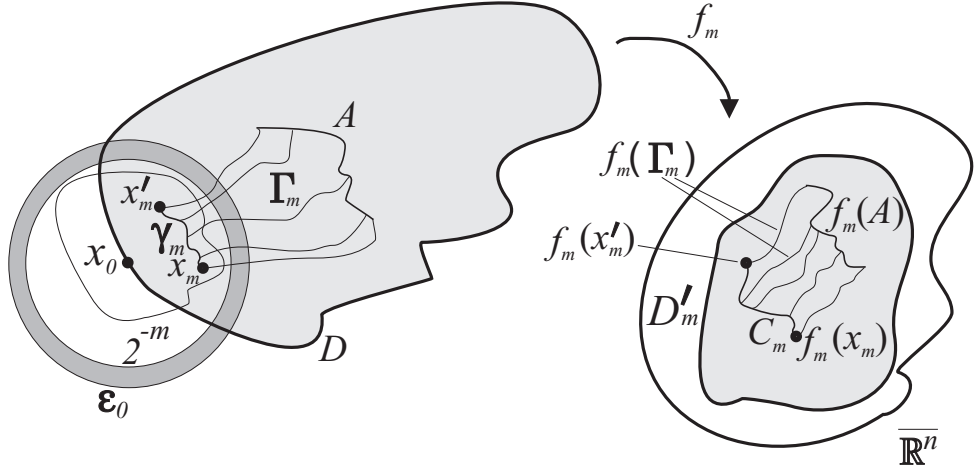}} \caption{To
the proof of Theorem~\ref{th3}}\label{fig6}
\end{figure}
We denote by $C_m$ the image of the path $\gamma_m (t)$ under the
mapping $f_m.$ From the relation~(\ref{eq6***}) it follows that
\begin{equation}\label{eq5.1}
h(C_m)\geqslant a/2\qquad\forall\, m\in {\Bbb N}\,,
\end{equation}
where $h$ denotes the chordal diameter of the set.

\medskip
Let $\varepsilon_0:={\rm dist}\,(x_0, A).$ Without loss of the
generality of reasoning, one may assume that the continuum $A$
participating in the definition of the class $\frak{F}_{\Phi, A, p,
\delta}(D),$ lies outside the balls $B(x_0, 2^{\,-m}),$
$m=1,2,\ldots, $ and $B(x_0, \varepsilon_0)\cap A=\varnothing.$ In
this case, the theorem on the property of connected sets that lie
neither inside nor outside the given set implies the relation
\begin{equation}\label{eq5B}
\Gamma_m>\Gamma(S(x_0, 2^{\,-m}), S(x_0, \varepsilon_0), D)\,,
\end{equation}
see e.g.~\cite[Theorem~1.I.5.46]{Ku}. Using Proposition~\ref{pr2}
and by~(\ref{eq4}), (\ref{eq5B}), we obtain that
\begin{equation}\label{eq10C}
M_p(f_m(\Gamma_m))\leqslant
\int\limits_{2^{\,-m}}^{r_0}\frac{dr}{r^{\frac{n-1}{p-1}}q^{\frac{1}{p-1}}_{mx_0}(r)}\rightarrow
0\,,\quad m\rightarrow\infty\,,
\end{equation}
where $q_{mx_0}(t)=\frac{1}{\omega_{n-1}r^{n-1}} \int\limits_{S(x_0,
t)}Q_m(x)\,d\mathcal{H}^{n-1}$ and $Q_m$ corresponds to the function
$Q$ of $f_m$ in~(\ref{eq2*!}).
On the other hand, observe that $f_m(\Gamma_m)=\Gamma(C_m, f_m(A),
D_m^{\,\prime}).$ By the condition of the lemma, $h(f_m(A))\geqslant
\delta$ for any $m\in {\Bbb N}. $ Therefore, by~(\ref{eq5.1})
$h(f_m(A))\geqslant \delta_1$ and $h(C_m)\geqslant\delta_1,$ where
$\delta_1:=\min\{\delta, a/2\}.$ Taking into account that the
domains $D_m^{\,\prime}:=f_m(D)$ are equ-uniform with respect to
$p$-modulus, we conclude that there exists $\sigma> 0$ such that

$$M_p(f_m(\Gamma_m))=M_p(\Gamma(C_m, f_m(A),
D_m^{\,\prime}))\geqslant \sigma\qquad\forall\,\, m\in {\Bbb N}\,,$$
which contradicts the condition~(\ref{eq6***}). The resulting
contradiction indicates that the assumption about the absence of
equicontinuity of $\frak {F}_{\Phi, A, p, \delta}(\overline {D})$
was wrong. The resulting contradiction completes the proof.~ $\Box$

\medskip
{\it Proof of Theorem~\ref{th4}.} The equicontinuity of the family
$\frak{R}_{\Phi, \delta, p, E}(D)$ inside the domain $D$ follows
from Theorem~\ref{th1} for $p=n$ and Theorem~\ref{th2} for $p\ne n$.
The possibility of continuous extension of any mapping $f\in
\frak{R}_{\Phi, \delta, p, E}(D)$ to $\partial D$ is established in
the same way as at the beginning of the proof of Theorem~\ref{th3},
and therefore the proof of this fact is omitted.

\medskip
It remains to show that the family $\frak{R}_{\Phi, \delta, p,
E}(D)$ is equicontinuous at $\partial D.$ Suppose the opposite. Then
there is $x_0\in \partial D$ for which $\frak{R}_{\Phi, \delta, p,
E}(D)$ is not equicontinuous at $x_0.$ Due to the additional
application of the inversion $\varphi(x)= \frac{x}{|x|^2},$ we may
assume that $x_0\ne \infty.$ Then there is a number $a>0$ with the
following property: for any $m=1,2, \ldots$ there is $x_m \in
\overline{D}$ and ${f}_m\in\frak{R}_{\Phi, \delta, p, E}(D)$ such
that $|x_0-x_m|< 1/m$ and, in addition, $h(f_m(x_m),
f_m(x_0))\geqslant a.$ Since $f_m$ has a continuous extension at
$x_0,$ we may assume that $x_m\in D.$ Besides that, we may find a
sequence $x^{\,\prime}_m\in D,$ $x^{\,\prime}_m\rightarrow x_0$ as
$m\rightarrow\infty$ such that $h(f_m(x^{\,\prime}_m),
f_m(x_0))\leqslant 1/m.$ Now, the relation~(\ref{eq6***}) holds.
Since the domain $D$ is locally connected, at the point $x_0,$ there
is a sequence of neighborhoods $V_m$ of the point $x_0$ with
$h(V_m)\rightarrow 0$ for $m\rightarrow\infty$ such that the sets $D
\cap V_m$ are domains and $D\cap V_m \subset B(x_0, 2^{\,-m}).$
Without loss of the generality of reasoning, going to subsequences,
if necessary, we may assume that $x_m, x^{\,\prime}_m \in D\cap
V_m.$ Join the points $x_m$ and $x^{\,\prime}_m$ by the path
$\gamma_m:[0,1]\rightarrow {\Bbb R}^n$ such that $\gamma_m(0)=x_m,$
$\gamma_m(1)=x^{\,\prime}_m$ and $\gamma_m(t)\in V_m$ for $t\in
(0,1),$ see Figure~\ref{fig2}.
\begin{figure}[h]
\centerline{\includegraphics[scale=0.6]{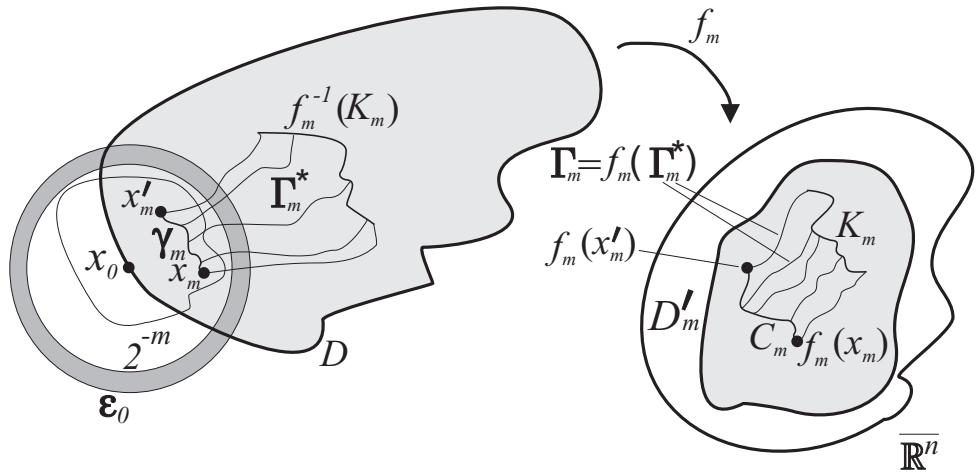}} \caption{To
the proof of Theorem~\ref{th4}}\label{fig2}
\end{figure}
We denote by $C_m$ the image of the path $\gamma_m $ under the
mapping $f_m.$ It follows from the relation~(\ref{eq6***}) that a
condition~(\ref{eq5.1}) is satisfied, where $h$ denotes a chordal
diameter of the set.

\medskip
By the definition of the family of mappings $\frak{R}_{\Phi, \delta,
p, E}(D),$ for any $m=1,2,\ldots ,$ any $f_m\in \frak{R}_{\Phi,
\delta, p, E}(D)$ and any domain $D^{\,\prime}_m:=f_m(D)$ there is a
continuum $K_m\subset D^{\,\prime}_m$ such that $h(K_m)\geqslant
\delta$ and $h(f_m^{\,-1}(K_m),
\partial D)\geqslant \delta>0.$ Since, by the hypothesis of the lemma, the domains $D^{\,\prime}_m$
are equi-uniform with respect to $p$-modulus, by~(\ref{eq5.1}) we
obtain that
\begin{equation}\label{eq13}
M_p(\Gamma(K_m, C_m, D^{\,\prime}_m))\geqslant b\,.
\end{equation}
for any $m=1,2,\ldots$ and some $b>0.$ Let $\Gamma_m$ be a family of
all paths $\beta:[0, 1)\rightarrow D^{\,\prime}_m$ such that
$\beta(0)\in C_m$ and $\beta(t)\rightarrow p\in K_m$ as
$t\rightarrow 1.$  Recall that a path $\alpha:[a, b)\rightarrow
{\Bbb R}^n$ is called a (total) $f$-lifting of a path $\beta:[a,
b)\rightarrow {\Bbb R}^n$ starting at $x_0,$ if $(f\circ
\alpha)(t)=\beta(t)$ for any $t\in [a, b).$ Let $\Gamma^*_m$ be a
family of all total $f_m$-liftings $\alpha:[0, 1)\rightarrow D$ of
$\Gamma_m$ starting at $\gamma_m.$ Such a family is well-defined
by~\cite[Theorem~3.7]{Vu}. Since the mapping $f_m$ is closed, we
obtain that $\alpha(t)\rightarrow f^{\,-1}_m(K_m)$ as $t\rightarrow
b-0,$ where $f^{\,-1}_m(K_m)$ denotes the pre-image of $K_m$ under
$f_m.$ Since $\overline{{\Bbb R}^n}$ is a compact metric space, the
set
$C_{\delta}:=\{x\in D: h(x, \partial D)\geqslant \delta\}$
is compact in $D$ for any $\delta>0$ and, besides that,
$f_m^{\,-1}(K_m)\subset C_{\delta}.$ By~\cite[Lemma~1]{Sm} the set
$C_{\delta}$ can be embedded in the continuum $E_{\delta}$ lying in
the domain $D.$ In this case, we may assume that ${\rm dist}\,(x_0,
E_{\delta})\geqslant \varepsilon_0$ by decreasing $\varepsilon_0.$
By the property of connected sets that lie neither inside nor
outside the given set, we obtain that
\begin{equation}\label{eq5C}
\Gamma^{\,*}_m>\Gamma(S(x_0, 2^{\,-m}), S(x_0, \varepsilon_0), D)\,,
\end{equation}
see e.g.~\cite[Theorem~1.I.5.46]{Ku}. Using Proposition~\ref{pr2}
and by~(\ref{eq4}), (\ref{eq5C}), we obtain that
$$M_p(f_m(\Gamma_m^*))\leqslant M_p(f_m(\Gamma(S(x_0, 2^{\,-m}),
S(x_0, \varepsilon_0), D)))\leqslant$$
\begin{equation}\label{eq10A}
\leqslant
\int\limits_{2^{\,-m}}^{r_0}\frac{dr}{r^{\frac{n-1}{p-1}}q^{\frac{1}{p-1}}_{mx_0}(r)}\rightarrow
0\,,\quad m\rightarrow\infty\,,
\end{equation}
where $q_{mx_0}(t)=\frac{1}{\omega_{n-1}r^{n-1}} \int\limits_{S(x_0,
t)}Q_m(x)\,d\mathcal{H}^{n-1}$ and $Q_m$ corresponds to $f_m$
in~(\ref{eq2*!}). Observe that $f_m(\Gamma^{\,*}_m)= \Gamma_m$ and
$M_p(\Gamma_m)=M_p(\Gamma(K_m, C_m, D^{\,\prime}_m)),$ so that
\begin{equation}\label{eq12B}
M_p(f_m(\Gamma^{\,*}_m))=M_p(\Gamma(K_m, C_m, D^{\,\prime}_m))\,.
\end{equation}
However, the relations (\ref{eq10A}) and (\ref{eq12B}) together
contradict~(\ref{eq13}). The resulting contradiction indicates that
the original assumption~(\ref{eq6***}) was incorrect, and therefore
the family of mappings $\frak{R}_{\Phi, \delta, p, E}(D)$ is
equicontinuous at every point $x_0\in \partial D.$~$\Box$

\medskip
{\it Proof of Theorem~\ref{th5}}. The equicontinuity of the family
$\frak{F}_{\Phi, A, p, \delta}(D)$ inside the domain $D$ follows
from~\cite[Theorem~4.1]{RS} for $p=n$ and Theorem~\ref{th2} for
$p\ne n$. The existence of a continuous extension of each
$f\in\frak{F}_{\Phi, A, p, \delta}(D)$ to a continuous mapping in
$\overline{D}$ follows from~\cite[Lemma~3]{Sev$_5$}. In particular,
the strong accessibility of $D_f^{\,\prime}=f(D)$ with respect to
$p$-modulus follows by Remark~\ref{rem1}.

\medskip
Let us show the equicontinuity of the family $\frak{F}_{\Phi, A, p,
\delta}(\overline{D})$ at $E_D,$ where $E_D$ denotes the space of
prime ends in $D.$ Suppose the contrary, namely, that the family
$\frak{F}_{\Phi, A, p, \delta}(\overline{D})$ is not equicontinuous
at some point $P_0\in E_D.$ Then there is a number $a>0,$ a sequence
$P_k\in \overline{D}_P,$ $k=1,2,\ldots,$ and elements
$f_k\in\frak{F}_{Q, A, p, \delta}(D)$ such that $d(P_k, P_0)<1/k$
and
\begin{equation}\label{eq3C}
h(f_k(P_k), f_k(P_0))\geqslant a\quad\forall\quad k=1,2,\ldots,\,.
\end{equation}
Since $f_k$ has a continuous extension to $\overline{D}_P,$ for any
$k\in {\Bbb N}$ there is $x_k\in D$ such that $d(x_k, P_k)<1/k$ and
$h(f_k(x_k), f_k(P_k))<1/k.$ Now, by~(\ref{eq3C}) we obtain that
\begin{equation}\label{eq4C}
h(f_k(x_k), f_k(P_0))\geqslant a/2\quad\forall\quad k=1,2,\ldots,\,.
\end{equation}
Similarly, since $f_k$ has a continuous extension to
$\overline{D}_P,$ there is a sequence $x_k^{\,\prime}\in D,$
$x_k^{\,\prime}\rightarrow P_0$ as $k\rightarrow \infty$ for which
$h(f_k(x_k^{\,\prime}), f_k(P_0))<1/k$ for $k=1,2,\ldots\,.$ Now, it
follows from~(\ref{eq4C}) that
\begin{equation}\label{eq5E}
h(f_k(x_k), f_k(x_k^{\,\prime}))\geqslant a/4\quad\forall\quad
k=1,2,\ldots\,,
\end{equation}
where $x_k$ and $x_k^{\,\prime}$ belong to $D$ and converge to $P_0$
as $k\rightarrow\infty,$ see Figure~\ref{fig3}.
\begin{figure}[h]
\centerline{\includegraphics[scale=0.6]{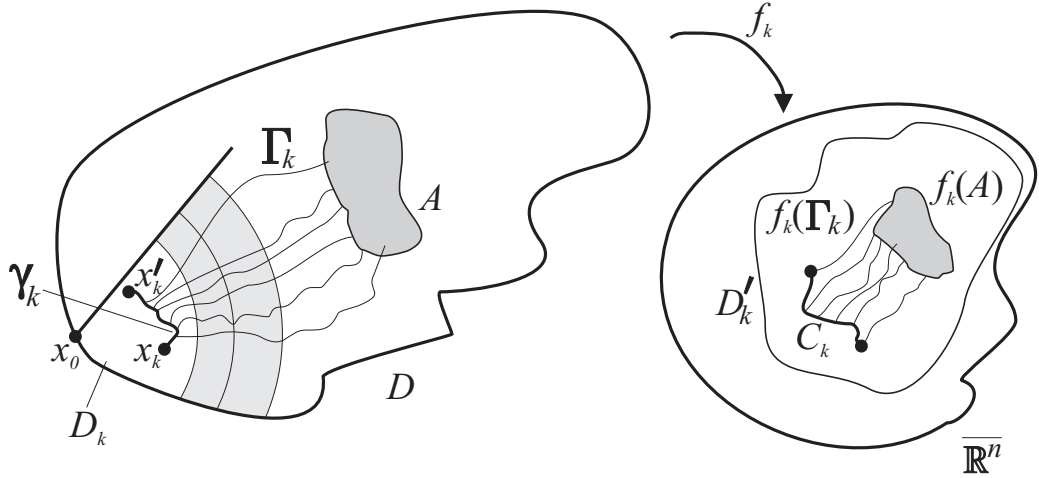}} \caption{To
the proof of Theorem~\ref{th5}}\label{fig3}
\end{figure}
By~\cite[Lemma~3.1]{IS$_2$}, cf.~\cite[Lemma~2]{KR}, a prime end
$P_0$ of a regular domain $D$ contains a chain of cuts $\sigma_k$
lying on spheres $S_k$ centered at some point $x_0\in \partial D $
and with Euclidean radii $r_k \rightarrow 0$ as $k\rightarrow \infty
$. Let $D_k$ be domains associated with the cuts $\sigma_k,$ $k=1,2,
\ldots .$ Since the sequences $x_k$ and $x_k^{\,\prime}$ converge to
the prime end $P_0$ as $k\rightarrow\infty,$ we may assume that
$x_k$ and $x_k^{\,\prime}\in D_k$ for any $k=1,2,\ldots, .$ Let us
join the points $x_k$ and $x_k^{\,\prime}$ by the path $\gamma_k,$
completely lying in $D_k.$ One can also assume that the continuum
$A$ from the definition of the class $\frak{F}_{\Phi, A, p,
\delta}(\overline{D})$ does not intersect with any of the domains
$D_k,$ and that ${\rm dist}\,(\partial D, A)>\varepsilon_0.$

\medskip
We denote by $C_k$ the image of the path $\gamma_k$ under the
mapping $f_k.$ It follows from the relation~(\ref{eq5E}) that
\begin{equation}\label{eq3G}
h(C_k)\geqslant a/4\qquad\forall\, k\in {\Bbb N}\,,
\end{equation}
where $h$ is a chordal diameter of the set.

\medskip
Let $\Gamma_k$ be a family of all paths joining $|\gamma_k|$ and $A$
in $D.$ By~\cite[Theorem~1.I.5.46]{Ku},
\begin{equation}\label{eq5D}
\Gamma_k>\Gamma(S(x_0, r_k), S(x_0, \varepsilon_0), D)\,.
\end{equation}
Using Proposition~\ref{pr2} and by~(\ref{eq4}), (\ref{eq5D}) we
obtain that
$$M_p(f_k(\Gamma_k))\leqslant M_p(f_k(\Gamma(S(x_0, r_k), S(x_0, \varepsilon_0), D)))\leqslant$$
\begin{equation}\label{eq14}
\leqslant
\int\limits_{r_k}^{r_0}\frac{dr}{r^{\frac{n-1}{p-1}}q^{\frac{1}{p-1}}_{kx_0}(r)}\rightarrow
0\,,\quad k\rightarrow\infty\,,
\end{equation}
where $q_{kx_0}(t)=\frac{1}{\omega_{n-1}r^{n-1}} \int\limits_{S(x_0,
t)}Q_k(x)\,d\mathcal{H}^{n-1}$ and $Q_k$ corresponds to the function
$Q$ of $f_k$ in~(\ref{eq2*!}).

\medskip
On the other hand, note that $f_k(\Gamma_k)=\Gamma(C_k, f_k(A),
D_k^{\,\prime}),$ where $D_k^{\,\prime}=f_k(D).$ Since by the
hypothesis of the lemma $h(f_k(A))\geqslant \delta$ for any $k\in
{\Bbb N},$  by~(\ref{eq3G}), $h(f_k(A))\geqslant \delta_1$ and
$h(C_k)\geqslant\delta_1,$ where $\delta_1:=\min\{\delta, a/4\}.$
Using the fact that the domains $D_k^{\,\prime}$ are equ-uniform
with respect to $p$-modulus, we conclude that there is $\sigma>0$
such that
$$M_p(f_k(\Gamma_k))=M_p(\Gamma(C_k, f_k(A),
D_k^{\,\prime}))\geqslant \sigma\qquad\forall\,\, k\in {\Bbb N}\,,$$
which contradicts condition~(\ref{eq14}). The resulting
contradiction indicates that the assumption of the absence of an
equicontinuity of the family $\frak{F}_{\Phi, A, p,
\delta}(\overline{D})$ was wrong. The resulting contradiction
completes the proof of the theorem.~$\Box$

\medskip
{\it Proof of Theorem~\ref{th6}.} The equicontinuity of the family
$\frak{R}_{\Phi, \delta, p, E}(D)$ inside the domain $D$ follows
from~\cite[Theorem~4.1]{RS} for $p=n$ and Theorem~\ref{th2} for
$p\ne n$. The existence of a continuous extension of each
$f\in\frak{R}_{\Phi, \delta, p, E}(D)$ to a continuous mapping in
$\overline{D}$ follows from~\cite[Lemma~3]{Sev$_5$}. In particular,
the strong accessibility of $D_f^{\,\prime}=f(D)$ with respect to
$p$-modulus follows by Remark~\ref{rem1}.

\medskip
It remains to show that the family $\frak{R}_{\Phi, \delta, p,
E}(D)$ is equicontinuous at $\partial_PD:=\overline{D}_P\setminus
D.$ Suppose the opposite. Arguing as in the proof of
Theorem~\ref{th5}, we construct two sequences $x_k$ and
$x_k^{\,\prime}\in D,$ converging to the prime end $P_0$ as
$k\rightarrow \infty,$ for which a relation~(\ref{eq5E}) holds. Let
us join the points $x_k$ and $x^{\,\prime}_k$ of the path
$\gamma_k:[0,1]\rightarrow{\Bbb R}^n$ such that $x_k^{\,\prime}\in
D,$ $\gamma_k(0)=x_k,$ $\gamma_k(1)=x^{\,\prime}_k$ and
$\gamma_k(t)\in D$ for $t\in (0,1).$  Denote by $C_k$ the image of
$\gamma_k $ under the mapping $f_k.$ It follows from the relation
(\ref{eq5E}) that
\begin{equation}\label{eq1B}
h(C_k)\geqslant a/4\,\,\forall\,\,k=1,2,\ldots .
\end{equation}
By~\cite[Lemma~3.1]{IS$_2$}, cf.~\cite[Lemma~2]{KR}, a prime end
$P_0$ of a regular domain $D$ contains a chain of cuts $\sigma_k$
lying on spheres $S_k$ centered at some point $x_0\in \partial D $
and with Euclidean radii $r_k \rightarrow 0$ as $k\rightarrow \infty
$. Let $D_k$ be domains associated with the cuts $\sigma_k,$ $k=1,2,
\ldots .$  Since the sequences $x_k$ and $x_k^{\,\prime}$ converge
to the prime end $P_0$ as $k\rightarrow\infty,$ we may assume that
$x_k$ and $x_k^{\,\prime}\in D_k$ for any $k=1,2,\ldots, .$

\medskip
By the definition of the family $\frak{R}_{\Phi, \delta, p, E}(D),$
for every $f_k\in \frak{R}_{Q, \delta, p, E}(D)$ and any domain
$D^{\,\prime}_k:=f_k(D)$ there is a continuum $K_k\subset
D^{\,\prime}_k$ such that $h(K_k)\geqslant \delta$ and
$h(f^{\,-1}(K_k),
\partial D)\geqslant \delta>0.$ Since, by the condition of the lemma, the
domains $D^{\,\prime}_k$ are equi-uniform with respect to
$p$-modulus, by~(\ref{eq1B}) we obtain that
\begin{equation}\label{eq13A}
M_p(\Gamma(K_k, C_k, D^{\,\prime}_k))\geqslant b\,.
\end{equation}
for any $k=1,2,\ldots$ and some $b>0.$ Let $\Gamma_k$ be a family of
all paths $\beta:[0, 1)\rightarrow D^{\,\prime}_k,$ where
$\beta(0)\in C_k$ and $\beta(t)\rightarrow p\in K_k$ as
$t\rightarrow 1.$ Let $\Gamma^*_k$ be a family of all total liftings
$\alpha:[0, 1)\rightarrow D$ of $\Gamma_k$ under the mapping $f_k$
starting at $\gamma_k.$ Such a family i well-defined
by~\cite[теорема~3.7]{Vu}. Since $f_k$ is closed,
$\alpha(t)\rightarrow f^{\,-1}_k(K_k)$ as $t\rightarrow 1,$ where
$f^{\,-1}_k(K_k)$ denotes the pre-image of $K_k$ under~$f_k.$ Since
$\overline{{\Bbb R}^n}$ is a compact metric space, the set
$C_{\delta}:=\{x\in D: h(x, \partial D)\geqslant \delta\}$
is compact in $D$ for any $\delta>0$ and, besides that,
$f_k^{\,-1}(K_k)\subset C_{\delta}.$ By~\cite[Lemma~1]{Sm} the set
$C_{\delta}$ can be embedded in the continuum $E_{\delta}$ lying in
the domain $D.$ In this case, we may assume that ${\rm dist}\,(x_0,
E_{\delta})\geqslant \varepsilon_0$ by decreasing $\varepsilon_0.$
Let $\Gamma_k$ be a family of all paths joining $|\gamma_k|$ and $A$
in $D.$ By~\cite[Theorem~1.I.5.46]{Ku},
\begin{equation}\label{eq5F}
\Gamma^*_k>\Gamma(S(x_0, r_k), S(x_0, \varepsilon_0), D)\,.
\end{equation}
Using Proposition~\ref{pr2} and by~(\ref{eq4}), (\ref{eq5F}) we
obtain that
$$M_p(f_k(\Gamma^*_k))\leqslant M_p(f_k(\Gamma(S(x_0, r_k), S(x_0, \varepsilon_0), D)))\leqslant$$
\begin{equation}\label{eq14A}
\leqslant
\int\limits_{r_k}^{r_0}\frac{dr}{r^{\frac{n-1}{p-1}}q^{\frac{1}{p-1}}_{kx_0}(r)}\rightarrow
0\,,\quad k\rightarrow\infty\,,
\end{equation}
where $q_{kx_0}(t)=\frac{1}{\omega_{n-1}r^{n-1}} \int\limits_{S(x_0,
t)}Q_k(x)\,d\mathcal{H}^{n-1}$ and $Q_k$ corresponds to the function
$Q$ of $f_k$ in~(\ref{eq2*!}). Observe that
$f_k(\Gamma^{\,*}_k)=\Gamma_k$ and, simultaneously,
$M_p(\Gamma_k)=M_p(\Gamma(K_k, C_k, D^{\,\prime}_k)).$ Now
\begin{equation}\label{eq12A}
M_p(f_k(\Gamma_k))=M_p(\Gamma(K_k, C_k, D^{\,\prime}_k))\,.
\end{equation}
Combining (\ref{eq14A}) and (\ref{eq12A}), we obtain a contradiction
with~(\ref{eq13A}). The resulting contradiction indicates that the
initial assumption~(\ref{eq6***}) was incorrect, and, therefore, the
family of mappings $\frak{R}_{\Phi, \delta, p, E}(D)$ is
equicontinuous at any point $x_0\in E_D.$~$\Box$

\medskip
\medskip
{\bf \noindent Evgeny Sevost'yanov} \\
{\bf 1.} Zhytomyr Ivan Franko State University,  \\
40 Bol'shaya Berdichevskaya Str., 10 008  Zhytomyr, UKRAINE \\
{\bf 2.} Institute of Applied Mathematics and Mechanics\\
of NAS of Ukraine, \\
1 Dobrovol'skogo Str., 84 100 Slavyansk,  UKRAINE\\
esevostyanov2009@gmail.com

\end{document}